\documentclass[11pt,a4paper]{scrartcl}

\usepackage[centertags]{amsmath} 
\usepackage{color} 
\usepackage{amsfonts}
\usepackage{amssymb}
\usepackage{amsthm}
\usepackage{epsfig} 
\usepackage{rotating}
\usepackage{psfrag}
\usepackage{caption}
\usepackage{booktabs}
\usepackage{enumitem}
\usepackage{pdflscape}


\newcommand{\N}{{\mathbb N}}

\newcommand{\R}{{\mathbb R}}

\newcommand{\XSet}{{\mathcal X}}
\newcommand{\USet}{{\mathcal U}}
\newcommand{\ZSet}{{\mathcal Z}}
\newcommand{\FSet}{{\mathcal F}}
\newcommand{\ESet}{{\mathcal E}}
\newcommand{\TSet}{{\mathcal T}}
\newcommand{\PSet}{{\mathcal P}}
\newcommand{\SSet}{{\mathcal S}}
\newcommand{\KSet}{{\mathcal K}}
\newcommand{\DSet}{{\mathcal D}}

\newcommand{\xb}{\boldsymbol{x}}
\newcommand{\vb}{\boldsymbol{v}}

\newcommand{\Ab}{\boldsymbol{A}}
\newcommand{\Bb}{\boldsymbol{B}}
\newcommand{\Cb}{\boldsymbol{C}}
\newcommand{\Kb}{\boldsymbol{K}}
\newcommand{\Sb}{\boldsymbol{S}}
\newcommand{\Qb}{\boldsymbol{Q}}
\newcommand{\Pb}{\boldsymbol{P}}

\newcommand{\IFun}{{\mathcal I}}

\newcommand{\proj}{\mathrm{proj}}
\newcommand{\rank}{\mathrm{rank}}
\newcommand{\vol}{\mathrm{vol}}

\newcommand{\ul}[1]{\underline{#1}} 
\newcommand{\ol}[1]{\overline{#1}}

\newtheorem{thm}{Theorem}
\newtheorem{cor}[thm]{Corollary}
\newtheorem{lem}[thm]{Lemma}
\newtheorem{assum}{Assumption}
\newtheorem{prop}[thm]{Proposition}

\newtheorem{rem}{Remark}

\captionsetup{format=plain,margin=10pt,font=normalsize,labelfont={bf,sf}}

\begin{document}

\begin{center}
{\LARGE
\textbf{
\textsf{On closed-loop dynamics of ADMM-based MPC}}
}

\renewcommand{\thefootnote}{$\dagger$} 

\vspace{5mm}
{
Moritz Schulze Darup\footnotemark[1] and Gerrit Book\footnotemark[1]
}
\vspace{2mm}

  \footnotetext[1]{M. Schulze Darup and G. Book are with 
the Automatic Control Group,  Department of Electrical Engineering and Information Technology,
        Universit\"at Paderborn, Warburger Stra\ss e 100,
33098 Paderborn, Germany.
        E-mail: {\texttt{moritz.schulzedarup@rub.de}}.}
\end{center}

\paragraph{Abstract.}

This paper studies the closed-loop dynamics of linear systems under approximate model predictive control (MPC). More precisely, we consider MPC implementations based on a finite number of ADMM iterations per time-step. 
We first show that the closed-loop dynamics can be described based on a nonlinear augmented model. We then characterize an invariant set around the augmented origin, where the dynamics become linear. 
Finally, we investigate the performance of the approximate MPC for various choices of the ADMM parameters based on a comprehensive numerical benchmark.

\paragraph{Keywords.}
Model predictive control (MPC), alternating direction method of multiplier (ADMM), real-time iterations, linear systems, state and input constraints.

\section{Introduction}

Model predictive control (MPC) is a popular optimization-based control strategy. Typically, an
optimal control problem (OCP) is solved in every time step to evaluate the control
action to be applied. The objective function of this OCP specifies the performance metric, while the constraints encode a
model of the system as well as constraints on states and inputs. For a convex quadratic performance metric, a linear model, and polytopic constraints, the
resulting OCP is a convex quadratic program (QP). Several methods for solving QPs arising in control exist. Examples include interior-point methods \cite{Wang2010}, active-set procedures \cite{Ferrau2014}, multiparametric programming \cite{Bemporad2002}, and proximal algorithms such as projected gradient schemes \cite{Patrinos2014,Giselsson2015}
and the alternating direction method of multipliers (ADMM) \cite{ODonoghue2013,Jerez2014}. The various solvers have in common that they iteratively approach the optimal solution. It is usually assumed that the number of iterations is high enough to approximate the optimum sufficiently well.
This assumption can be hard to realize for MPC implementations tailored for resource-constrained embedded platforms, networked systems, or very high sampling rates.
For those applications, termination of the optimization after a small number of iterations can be required even if the iterates have not yet converged to the optimum.

At first sight, ``incomplete'' optimization seems to be doomed to fail. However, in the framework of optimization-based control, a fixed number of iterations per time step can be sufficient since additional iterations follow at future sampling instances. In the resulting setup, optimization-iterates are, to some extent, coupled to sampling times and thus called real-time iterations. MPC based on real-time iterations has been realized using various optimization schemes. Newton-type single and multiple shooting solvers are considered in \cite{Diehl2005_Single} and \cite{Diehl2005}. A projected gradient scheme and real-time ADMM have recently been discussed in~\cite{VanParys2018} and \cite{SchulzeDarup2019_ECC_ADMM}, respectively.
All of  these works focus on the special case of a single optimization iteration per time step. 
Moreover, state and input constraints are often not considered. In fact, constraints are neglected in \cite{Diehl2005_Single,Diehl2005} (at least for the theoretical statements) and only input constraints are included in \cite{VanParys2018}.

In this paper, motivated by the promising results in \cite{SchulzeDarup2019_ECC_ADMM}, we study MPC based on real-time ADMM for linear systems with state and input constraints. However, in contrast to \cite{SchulzeDarup2019_ECC_ADMM} and previous works on real-time iterations, we allow multiple iterations per time step.
Nevertheless, the number of iterations $M$ is a priori fixed for every sampling instant and, in particular, independent of the current system state. 
As a consequence, the control law is explicitly defined based on the $M$ ADMM iterations.
 The purpose of this work is to analyze how the closed-loop system dynamics change with the parameters of the ADMM scheme. To this end, inspired by \cite{VanParys2018} and \cite{SchulzeDarup2019_ECC_ADMM}, we show that an augmented state space model allows to describe the behavior of the controlled system. The subsequent analysis of the augmented system is twofold. First, a theoretical analysis will reveal some characteristics of the augmented system. 
  Second, a numerical benchmark will indicate that MPC based on real-time ADMM is competitive (compared to standard MPC) if a suitable parametrization is used. For example, in one analyzed scenario with $M=10$ iterations per time step (that is specified in line $8$ of Tab.~\ref{tab:Benchmark}), $486$ out of $500$ initial states (that are feasible for the original MPC) are steered to the origin without violating the constraints and with a performance decrease of only $0.03\%$ (compared to the original MPC).

The paper is organized as follows.  Some frequently used notation is introduced in the remainder of this section. 
In Section~\ref{sec:Background}, we summarize basic results on MPC for linear systems and corresponding implementations using ADMM.
In Section~\ref{sec:augmentedModel}, we introduce the real-time ADMM scheme and derive the augmented model for the closed-loop dynamics. We show in Section~\ref{sec:LinearAroundOrigin} that the dynamics become linear around the augmented origin.
We further investigate the linear regime in Sections~\ref{sec:positiveInvariance} and~\ref{sec:costToGo} by specifying an invariant set and the cost-to-go around the origin, respectively. In Sections~\ref{sec:designParameters} and \ref{sec:benchmark}, we discuss different choices of the ADMM parameters and analyze their impact based on a numerical benchmark. Finally, conclusions and an outlook are given in Section~\ref{sec:Conclusions}.

\subsection*{Notation}
\label{sec:Notation}

The sets of natural numbers (including~$0$) and real numbers are denoted by $\N$ and $\R$, respectively. The identity matrix in $\R^{n \times n}$ is called $I_n$.
With $0_{m \times n}$, we denote the zero matrix in $\R^{m \times n}$.
For the zero vector in $\R^m$, we write $0_{m}$ instead of $0_{m \times 1}$.  
For vectors $x \in \XSet \subset \R^n$ and $z \in \ZSet \subset \R^q$, 
 we occasionally write
$$
\begin{pmatrix}
x \\
z 
\end{pmatrix} \in \XSet \times \ZSet,
$$
instead of $(x,z) \in \XSet \times \ZSet$, i.e., we omit the splitting of concatenated vectors into vector pairs. Vector-valued inequalities such as $z \leq \overline{z}$ with $\overline{z} \in \R^q$ are understood element-wise. The indicator function $\IFun_\ZSet$ of some set $\ZSet$ is defined as
$$
\IFun_\ZSet(z) := \left\{ \begin{array}{ll}
0 & \quad \text{if} \quad z \in \ZSet, \\
\infty & \quad \text{if} \quad z \notin \ZSet.
\end{array}\right.
$$

\section{Background on ADMM-based MPC for linear systems}
\label{sec:Background}

We here consider linear discrete-time systems
\begin{equation}
\label{eq:linearSys}
x(k+1)=A x(k) + B u(k), \qquad x(0):=x_0
\end{equation}
with state and input constraints of the form
\begin{subequations}
\label{eq:constraintsXU}
\begin{align}
x(k) \in \XSet &:= \{ x \in \R^{n} \,|\, \ul{x} \leq x \leq \ol{x} \} \quad \text{and} \\
 u(k) \in \USet &:= \{ u \in \R^{m} \,|\, \ul{u} \leq u \leq \ol{u} \}.
\end{align}
\end{subequations}
The box-constraints are characterized by bounds $\ul{x}, \ol{x} \in \R^n$ and $\ul{u}, \ol{u} \in \R^m$ 
satisfying $\ul{x}<0_n<\ol{x}$ and $\ul{u} < 0_m < \ol{u}$.
Now, standard MPC is based on solving the OCP 
\begin{subequations}
\label{eq:OCP}
\begin{align}
\label{eq:VPerformance}
\!V_N(x)  :=  \min_{\substack{\hat{x}(0),\dots,\hat{x}(N),\\\hat{u}(0),\dots,\hat{u}(N-1)}} & \!\!\varphi(\hat{x}(N))  + \sum_{k=0}^{N-1} \ell(\hat{x}(k),\hat{u}(k) )   \span \span   \\
\label{eq:x0Constraint}
\text{s.t.} \qquad  \hat{x}(0) & = x, && \\
\label{eq:xPlusConstraint}
 \hat{x}(k+1)&=A \hat{x}(k) + B \hat{u}(k) && \forall k  \in \{0,...,N-1\}, \\
\label{eq:XConstraint}
 \hat{u}(k) &  \in \USet && \forall k  \in \{0,...,N-1\}, \\
\label{eq:UConstraint}
 \hat{x}(k) & \in \XSet  && \forall k  \in \{1,...,N\}
\end{align}
\end{subequations}
in every time step for the current state $x=x(k)$. The objective function thereby consists of quadratic cost functions  
\begin{equation}
\label{eq:stageCost}
\varphi(\hat{x}):=\hat{x}^\top P \hat{x} \qquad \text{and} \qquad \ell(\hat{x},\hat{u}):=\hat{x}^\top Q \hat{x}+ \hat{u}^\top R \hat{u},
\end{equation}
where the weighting matrices $Q$ and $R$ are design parameters and where $P$ is chosen as the solution of the discrete-time algebraic Riccati equation (DARE)
\begin{equation}
\label{eq:DARE}
A^\top ( P- P\,B\,(R+B^\top P\,B )^{-1} B^\top P)\,A - P + Q = 0.
\end{equation}
The control action in every time step then is 
\begin{equation}
\label{eq:optimalInput}
u(k)=\hat{u}^\ast(0),
\end{equation}
 i.e., the first element of the optimal control sequence for~\eqref{eq:OCP}.
For completeness, we make the following standard assumptions. 

\begin{assum}
The pair $(A,B)$ is stabilizable, that $R$ is positive definite, and that $Q$ can be written as $L^\top L$ with $(A,L)$ being detectable.
\end{assum}

We additionally note that no terminal set is considered in~\eqref{eq:OCP}. Recursive feasibility and convergence to the origin may thus not hold for every initially feasible state
\begin{equation}
\label{eq:setFN}
x \in \FSet_N := \{ x \in \XSet \,|\, \eqref{eq:OCP}\,\,\text{is feasible}\}.
\end{equation}
 We stress, however, that recursive feasibility and convergence guarantees can be obtained for almost every $x \in \FSet_N$ by suitably choosing the horizon length $N$\,(see, e.g., 
 \cite{Grieder2004b}, \cite[Thm.~13]{Boccia2014}, or \cite[Thm.~3]{SchulzeDarup2016_ECC_MPC}).

It is easy to see that the OCP~\eqref{eq:OCP} is a QP parametrized by the current state~$x$. As stated in the introduction, we here use ADMM (see, e.g., \cite{Boyd2011}) for its approximate solution.
 To prepare the application of ADMM, we rewrite~\eqref{eq:OCP} as the QP 
\begin{subequations}
\label{eq:QP}
\begin{align}
\label{eq:QPCost}
V_N(x)  = \min_{z \in \ZSet} \,\,\frac{1}{2} z^\top H z & +x^\top Q x\\
\label{eq:QPEqualityConstraint}
\text{s.t.} \quad  \quad  G z &= F x
\end{align}
\end{subequations}
with the decision variables
\begin{equation}
\label{eq:zDefinition}
z^\top:= \begin{pmatrix}
\begin{pmatrix}
\hat{u}(0) \\
\hat{x}(1)
\end{pmatrix}^\top & \dots & \begin{pmatrix}
\hat{u}(N-1) \\
\hat{x}(N)
\end{pmatrix}^\top 
\end{pmatrix},
\end{equation}
the constraint set
\begin{equation}
\label{eq:setZ}
\ZSet := \{ z \in \R^{q} \,|\, \ul{z} \leq z \leq \ol{z} \},
\end{equation}
and suitable matrices $F \in \R^{p \times n}$, $G \in \R^{p \times q}$, and $H \in \R^{q \times q}$, where $p:=N n $ and $q:=p+Nm$.
We first note that the constraint~\eqref{eq:x0Constraint} and the associated variable $\hat{x}(0)$ have been eliminated in~\eqref{eq:QP}. 
We further note that the specific order of $\hat{x}(k)$ and $\hat{u}(k)$ in~\eqref{eq:zDefinition} facilitates some  mathematical expressions in the subsequent sections.
We finally note that the bounds  $\ul{z},\ol{z} \in \R^q$ and the matrix $H$ are uniquely determined by the constraints~\eqref{eq:constraintsXU}, the cost functions \eqref{eq:VPerformance} and \eqref{eq:stageCost}, and definition \eqref{eq:zDefinition}.
In contrast, $G$ and $F$ are not unique. To simplify reproducibility of our results, we use 
\begin{equation}
\label{eq:definitionGF}
G:= \begin{pmatrix}
-B &\,\,\,\, I_n &\,\,\,\, 0_{n \times m} &\,\,\,\, 0_{n \times n} & & \,\,\,\, 0_{n \times n}\\
0_{n \times m} &\,\, -A &\,\, -B &\,\,  I_n \\
\vdots & &\,\,\,\, \ddots &\,\,\,\, \ddots &\,\,\,\, \ddots \\
0_{n \times m}& & &\,\,\,\, -A &\,\,\,\, -B &\,\,\,\,  I_n
\end{pmatrix} \quad \text{and} \quad
F:=\begin{pmatrix}
A \\
0_{(p-n)\times n}
\end{pmatrix} 
\end{equation}
throughout the paper. Now, by introducing the set
$$
\ESet(x):= \{ z \in \R^{q} \,|\,G z = F x \}, 
$$
it is straightforward to show that the optimizers of~\eqref{eq:QP} and 
\begin{subequations}
\label{eq:copyOCP}
\begin{align}
\label{eq:copyObjective}
 \min_{y,z}\, \frac{1}{2} y^\top \! H y &+ \IFun_{\ESet(x)}(y)+\IFun_{\ZSet}(z)+ \frac{\rho}{2} \| y - z\|_2^2,\\
\label{eq:copyConstraint}
\text{s.t.} \quad  \quad  y &=z,
\end{align}
\end{subequations}
are equivalent (see \cite[Eqs.~(9)-(10)]{Jerez2014}) for any positive $\rho \in \R$. Due to~\eqref{eq:copyConstraint}, the decision variable $y$ acts as a copy of~$z$. Hence, $y$ and $z$ are interchangeable in~\eqref{eq:copyObjective}. However, the specific choice in \eqref{eq:copyObjective} turns out to be useful \cite{Jerez2014}.
We next investigate the dual problem to~\eqref{eq:copyOCP}, which is given by 
\begin{equation}
\label{eq:dualOCP}
\max_{\mu} \,\,  \left(\inf_{y,z} L_\rho(y,z,\mu) \right),  
\end{equation}
where the (augmented) Lagrangian with the Lagrange multipliers $\mu$ reads
$$
L_\rho(y,z,\mu):=\frac{1}{2} y^\top H y + \IFun_{\ESet(x)}(y)+\IFun_{\ZSet}(z) + \frac{\rho}{2} \| y-z \|_2^2 + \mu^\top (y-z).
$$
ADMM solves~\eqref{eq:dualOCP} by repeatedly carrying out the iterations
\begin{subequations}
\begin{align}
\label{eq:yIter}
y^{(j+1)}&:= \arg \min_{y}  L_\rho\big(y,z^{(j)},\mu^{(j)}\big), \\
z^{(j+1)}&:= \arg \min_{z}   L_\rho\big(y^{(j+1)},z,\mu^{(j)}\big),  \quad \text{and}\\
\label{eq:muIter}
\mu^{(j+1)} &:= \mu^{(j)} + \rho\, \big( y^{(j+1)} - z^{(j+1)} \big)
\end{align}
\end{subequations}
(cf.~\cite[Sect.~3.1]{Boyd2011}).
Since $z^{(j)}$ and $\mu^{(j)}$ are constant in~\eqref{eq:yIter}, we obviously have
\begin{align*}
y^{(j+1)}  = \arg \min_{y} \, \frac{1}{2} y^\top (H &+ \rho I_p) y + \big(\mu^{(j)} - \rho z^{(j)}\big)^\top \!y \\
\text{s.t.} \quad  \quad  G y &= F x.
\end{align*}
The solution to this equality-constrained QP results from
\begin{equation}
\label{eq:yPlus1Equation}
\begin{pmatrix}
H + \rho I_q & G^\top \\
G & 0_{p \times p} 
\end{pmatrix} \begin{pmatrix}
y^{(j+1)} \\
\ast
\end{pmatrix} = \begin{pmatrix}
\rho z^{(j)} - \mu^{(j)} \\
F x
\end{pmatrix}.
\end{equation}
Thus, precomputing the matrix
\begin{equation}
\label{eq:EBlocks}
E=\begin{pmatrix}
E_{11} & E_{12} \\
E_{12}^\top & E_{22}
\end{pmatrix} := \begin{pmatrix}
H + \rho I_q & G^\top \\
G & 0_{p \times p} 
\end{pmatrix}^{-1}
\end{equation}
allows to evaluate 
\begin{equation}
\label{eq:yIterResult}
y^{(j+1)} = E_{11} \big( \rho z^{(j)} - \mu^{(j)} \big) + E_{12} F x. 
\end{equation}
According to \cite[Sect.~III.B]{Jerez2014}, we further have
\begin{equation}
\label{eq:zIterResult}
z^{(j+1)}=\proj_\ZSet \bigg(  y^{(j+1)} + \frac{1}{\rho} \mu^{(j)} \bigg),
\end{equation}
i.e., $z^{(j+1)}$ results from projecting $y^{(j+1)} + \rho^{-1} \mu^{(j)}$ onto the set $\ZSet$.
In summary, by substituting $ y^{(j+1)}$ from~\eqref{eq:yIterResult} into~\eqref{eq:zIterResult} and~\eqref{eq:muIter}, we obtain the two iterations
\begin{subequations}
\label{eq:iterationsFinal}
\begin{align}
\label{eq:zIterFinal}
z^{(j+1)}&:= \proj_\ZSet \bigg(  E_{11} \big( \rho z^{(j)} - \mu^{(j)} \big)+ E_{12} F x + \frac{1}{\rho} \mu^{(j)} \!\bigg)\qquad \text{and} \\
\label{eq:muIterFinal}
\mu^{(j+1)}&:= \mu^{(j)} + \rho \left( E_{11} \big( \rho z^{(j)} - \mu^{(j)} \big) +E_{12} F x  - z^{(j+1)} \right)
\end{align}
\end{subequations}
that are independent of the copy $y$ introduced in~\eqref{eq:copyOCP}.
The iterations~\eqref{eq:iterationsFinal} form the basis for our subsequent analysis of  ADMM-based MPC.
For their derivation, we followed  the procedure in \cite{Jerez2014} that considered  the ``uncondensed'' QP~\eqref{eq:QP} with equality constraints. We stress that ADMM-based MPC can be implemented differently. For example, an implementation for the ``condensed'' QP without equality constraints is discussed in \cite{Ghadimi2015}.
For the analysis to be presented, the uncondensed form is more intuitive. 
However, in contrast to our approach, the method in \cite{Ghadimi2015} allows to efficiently incorporate terminal constraints. In this context, we note that ADMM can only be efficiently applied if the underlying iterations are easy to evaluate, where 
the crucial step is usually a projection similar to~\eqref{eq:zIterFinal}.
Here, the projection onto $\ZSet$ can be efficiently evaluated due to~\eqref{eq:setZ}.
In fact, for such box-constraints, we easily compute
\begin{equation}
\label{eq:projZz}
\left(\proj_{\ZSet}(z) \right)_i = \left\{ \begin{array}{ll}
\ol{z}_i & \quad \text{if} \quad \ol{z}_i < z_i, \\
z _i &  \quad\text{if} \quad \ul{z}_i \leq z_i \leq \ol{z}_i, \\
\ul{z}_i &  \quad \text{if} \quad z_i < \ul{z}_i. 
\end{array}\right.
\end{equation}
We note the consideration of a terminal set in~\eqref{eq:OCP} (i.e., $\hat{x}(N) \in \TSet$ instead of $\hat{x}(N) \in \XSet$) will, in general, result in a non-trivial set $\ZSet$ and, hence, in a non-trivial projection. However, we could consider terminal sets that allow for efficient projections such as, e.g., box-shaped or ellipsoidal sets $\TSet$.
We further note that implementing \eqref{eq:zIterResult} can be more efficient than \eqref{eq:zIterFinal} if a sparse factorization of the matrix in~\eqref{eq:yPlus1Equation} is used to compute $y^{(j+1)}$ instead of the dense matrix $E$.

\section{An augmented model for the closed-loop dynamics}
\label{sec:augmentedModel}

Generally, many iterations~\eqref{eq:iterationsFinal} are required to solve~\eqref{eq:QP} (resp.~\eqref{eq:OCP}) with a certain accuracy. The number of required iterations varies, among other things, with the current state. 
Here, we fix the number of iterations to $M \in \N$ for the whole runtime of the controller and consequently independent of the current state. In contrast to existing works, we do not investigate the accuracy of the resulting ADMM scheme for a specific state but we study the dynamics of the corresponding closed-loop system in general.
To this end, we initially provide a more precise description of the controlled system. In every time step $k$, 
we compute $z^{(M)}(k)$ and $\mu^{(M)}(k)$ according to \eqref{eq:iterationsFinal} based on $x(k)$, $z^{(0)}(k)$, and $\mu^{(0)}(k)$.
Inspired by~\eqref{eq:optimalInput}, the input~$u(k)$ is then chosen as the first element of the input sequence contained in $ z^{(M)} (k)$, i.e.,
\begin{equation}
\label{eq:uADMM}
u(k)=C_u z^{(M)}(k) \qquad \text{with}  \qquad C_u:=\begin{pmatrix}
I_m\,\,\, & 0_{m\times (q-m)}
\end{pmatrix}. 
\end{equation}
Obviously, the resulting input depends on the initializations
$z^{(0)}(k)$ and $\mu^{(0)}(k)$ of the iterations \eqref{eq:iterationsFinal}. In principle, we can freely choose these initializations in every time step.
However, it turns out to be useful to reuse data from  previous time steps. This approach is called warm-start and it is well-established in optimization-based control. More precisely, we choose the initial values $z^{(0)}(k+1)$ and $\mu^{(0)}(k+1)$ at step $k+1$ based on the final iterates $z^{(M)}(k)$ and $\mu^{(M)}(k)$ from step $k$. Moreover, for simplicity,  we restrict ourselves to linear updates of the form
\begin{subequations}
\label{eq:zMuUpdates}
\begin{align}
z^{(0)}(k+1) &:= D_z z^{(M)} (k)  \qquad \text{and} \\
 \mu^{(0)}(k+1) &:= D_\mu \mu^{(M)} (k), 
\end{align}
\end{subequations}
where we note that similar updates have been considered in \cite[Sect.~2.2]{Diehl2005_Single}. Suitable choices for $D_z$ ad $D_\mu$ will be  discussed later in Section~\ref{sec:designParameters}. Here, we focus on 
the structural dynamics of the controlled system.
To this end, we first note that the open-loop dynamics~\eqref{eq:linearSys}, the $M$ iterations~\eqref{eq:iterationsFinal}, the input~\eqref{eq:uADMM}, and the updates~\eqref{eq:zMuUpdates} determine the closed-loop behavior.
More formally, we show in the following that the controlled system can be described using the augmented state
\begin{equation}
\label{eq:xxDefintion}
\xb  := \begin{pmatrix}
x \\
z^{(0)} \\
\mu^{(0)} 
\end{pmatrix} \in \R^{r}
\end{equation}
with $r:=n+2q$. In fact, according to \eqref{eq:linearSys}, \eqref{eq:uADMM}, and \eqref{eq:zMuUpdates} the closed-loop dynamics are captured by the augmented system 
\begin{equation}
\label{eq:augmentedSys}
\xb(k+1)=\Ab \xb(k)+\Bb \vb ( \xb(k)) \,,\qquad \xb(0):=\xb_0.
\end{equation}
with the augmented system matrices
$$
\Ab := \begin{pmatrix}
A & 0_{n\times q} & 0_{n\times q} \\
0_{q \times n} & 0_{q\times q} &  0_{q\times q} \\
0_{q \times n} &  0_{q\times q} &  0_{q\times q}
\end{pmatrix} \qquad \text{and} \qquad \Bb:= \begin{pmatrix}
B C_u & 0_{n \times q}\\
D_z & 0_{q \times q} \\
0_{q \times q} & D_\mu
\end{pmatrix},
$$
and the augmented control law $\vb:\R^r \rightarrow \R^{2q}$ with
\begin{equation}
\label{eq:vControlLaw}
\vb(\xb):=\begin{pmatrix}
z^{(M)} \\
\mu^{(M)} 
\end{pmatrix}.
\end{equation}
According to the following proposition,  $\vb(\xb)$ is continuous and  piecewise affine in~$\xb$.

\begin{prop}
Let $\vb$ be defined as in~\eqref{eq:vControlLaw} with $z^{(M)}$ and $\mu^{(M)}$ resulting from $M \in \N$ iterations \eqref{eq:iterationsFinal}. Then,  $\vb$ is continuous and piecewise affine in $\xb$.
\end{prop}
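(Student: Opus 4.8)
The plan is to argue by induction on the ADMM iteration index~$j$, exploiting that the class of continuous piecewise-affine (CPA) maps is closed under affine operations and under composition. Recall that a map is CPA if there is a finite partition of its domain into polyhedra on each of which the map is affine, and if the map is globally continuous. Since $\vb(\xb)$ stacks $z^{(M)}$ and $\mu^{(M)}$, which are obtained from $\xb = (x, z^{(0)}, \mu^{(0)})$ by iterating \eqref{eq:iterationsFinal} exactly $M$ times, it will suffice to show that a single iteration acts as a CPA map in its arguments and that CPA maps compose.

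The key building block will be the projection $\proj_\ZSet$. From the explicit formula \eqref{eq:projZz}, each component of $\proj_\ZSet$ is the scalar saturation $z_i \mapsto \max\{\ul{z}_i, \min\{z_i, \ol{z}_i\}\}$, which is continuous and affine on each of the three intervals induced by the thresholds $\ul{z}_i$ and $\ol{z}_i$. Hence $\proj_\ZSet : \R^q \to \R^q$ is CPA, with polyhedral (indeed box-shaped) regions. For the base of the induction, I would note that $x$, $z^{(0)}$, and $\mu^{(0)}$ are coordinate projections of $\xb$ and thus trivially affine, and a fortiori CPA, in~$\xb$.

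For the inductive step, I would suppose that $z^{(j)}$ and $\mu^{(j)}$ are CPA in~$\xb$. The argument of the projection in \eqref{eq:zIterFinal}, namely $E_{11}(\rho z^{(j)} - \mu^{(j)}) + E_{12} F x + \rho^{-1}\mu^{(j)}$, is an affine function of $(z^{(j)}, \mu^{(j)}, x)$ and therefore CPA in~$\xb$; composing the CPA projection $\proj_\ZSet$ with this CPA argument then yields $z^{(j+1)}$ as a CPA function of~$\xb$. Substituting into \eqref{eq:muIterFinal}, the update $\mu^{(j+1)}$ is an affine function of the CPA quantities $z^{(j)}$, $\mu^{(j)}$, $x$, and $z^{(j+1)}$, hence again CPA. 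Iterating this step $M$ times shows that $z^{(M)}$ and $\mu^{(M)}$, and thus $\vb$, are CPA in~$\xb$.

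The only nonroutine point will be to justify the two closure properties invoked above. Continuity is inherited automatically, since sums, affine images, and compositions of continuous maps are continuous, so the real content is keeping the polyhedral partitions finite. Composing with an affine map pulls each region back to a polyhedron (preimages of polyhedra under affine maps are polyhedra), and forming affine combinations amounts to passing to the common refinement of finitely many finite polyhedral partitions, which is again a finite polyhedral partition. Granting these standard facts from the multiparametric-programming theory underlying explicit MPC, the induction completes the argument; I expect the bookkeeping of the partition refinement, rather than any genuine difficulty, to be the main thing to make precise.
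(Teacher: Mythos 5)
Your proposal is correct and follows essentially the same route as the paper's own proof: induction on the iteration index $j$, with the base case being that $(z^{(0)},\mu^{(0)})$ are coordinate (hence linear) functions of $\xb$, and the inductive step combining the affinity of the iteration's argument in $(x,z^{(j)},\mu^{(j)})$, the piecewise affinity of $\proj_\ZSet$ from \eqref{eq:projZz}, and closure of continuous piecewise-affine maps under composition and affine combination. The only difference is that you sketch justifications for these closure properties (polyhedral preimages and partition refinement), which the paper simply invokes as standard facts.
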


\begin{proof}
We prove the claim by showing that the iterates $z^{(j)}$ and $\mu^{(j)}$ are continuous and piecewise affine in $\xb$ not only for $j=M$ but for every $j \in \{0, \dots, M\}$.  We prove the latter statement by induction.
For $j=0$, continuity and piecewise affinity hold by construction since
$$
\begin{pmatrix}
z^{(0)} \\
\mu^{(0)} 
\end{pmatrix} = \begin{pmatrix}
0_{2q \times n} & I_{2q}
\end{pmatrix} \xb,
$$
i.e., $z^{(0)}$ and $\mu^{(0)}$ are linear in $\xb$.
It remains to prove that $z^{(j)}$ and $\mu^{(j)}$ being continuous and piecewise affine implies continuity and piecewise affinity of $z^{(j+1)}$ and $\mu^{(j+1)}$.
To this end, we first note that the iterations~\eqref{eq:iterationsFinal} can be rewritten as
\begin{equation}
\label{eq:zjMujPlus1}
z^{(j+1)} = \proj_{\ZSet}\left(\Kb^{(1)} \begin{pmatrix}
x \\
z^{(j)}\\
\mu^{(j)} 
\end{pmatrix}\right) \quad \text{and} \quad \mu^{(j+1)} =  \rho \left(\Kb^{(1)} \begin{pmatrix}
x \\
z^{(j)}\\
\mu^{(j)}
\end{pmatrix} - z^{(j+1)}\right)
\end{equation}
with
\begin{equation}
\label{eq:K1}
\Kb^{(1)} := \begin{pmatrix}
E_{12} F &\quad \rho E_{11} &\quad \frac{1}{\rho} I_q - E_{11}
\end{pmatrix}.
\end{equation}
Clearly, 
$$
\zeta^{(j)}:=\Kb^{(1)} \begin{pmatrix}
x \\
z^{(j)}\\
\mu^{(j)} 
\end{pmatrix}
$$
is continuous and piecewise affine in $\xb$ if these properties hold for $z^{(j)}$ and $\mu^{(j)}$. Moreover $\proj_{\ZSet}(\zeta)$ is continuous and piecewise affine in $\zeta$ as apparent from~\eqref{eq:projZz}.
Since the composition of two continuous and piecewise affine functions results in a continuous and piecewise affine function, $z^{(j+1)}$ as in~\eqref{eq:zjMujPlus1} is indeed continuous and piecewise affine in $\xb$. As a consequence, $\mu^{(j+1)}=\rho (\zeta^{(j+1)}-z^{(j+1)})$ is continuous and piecewise affine in $\xb$ since it results from the addition of two continuous and piecewise affine functions.
\end{proof}

Obviously, the augmented system \eqref{eq:augmentedSys} inherits continuity and piecewise affinity from $\vb(\xb)$. This trivial result is summarized in the following corollary for reference. Moreover, we point out a connection between the augmented systems in \eqref{eq:augmentedSys} and \cite[Eq.~(21)]{SchulzeDarup2019_ECC_ADMM} in Remark~\ref{rem:augmentedSystems}. 

\begin{cor}
Let $\vb$ be defined as in~\eqref{eq:vControlLaw} with $z^{(M)}$ and $\mu^{(M)}$ resulting from $M \in \N$ iterations \eqref{eq:iterationsFinal}. Then, the dynamics \eqref{eq:augmentedSys} are continuous and piecewise affine in~$\xb$.
\end{cor}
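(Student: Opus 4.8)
The plan is to read off the claim directly from the preceding proposition, since the corollary is a statement about the composite right-hand side of \eqref{eq:augmentedSys} rather than about $\vb$ in isolation. The approach is to write the augmented update as the map $\xb \mapsto \Ab\xb + \Bb\,\vb(\xb)$ and then to establish continuity and piecewise affinity of each of the two summands separately before combining them via elementary closure properties of continuous piecewise affine functions.

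First I would observe that $\xb \mapsto \Ab\xb$ is linear, hence trivially continuous and piecewise affine (it is affine on all of $\R^r$, i.e.\ with respect to the trivial single-region partition). Second, by the proposition just proved, $\vb$ as defined in \eqref{eq:vControlLaw} is continuous and piecewise affine in $\xb$; left-multiplying by the constant matrix $\Bb$ is itself a (continuous, globally affine) map, and composing a continuous piecewise affine function with a linear map preserves both properties, so $\xb \mapsto \Bb\,\vb(\xb)$ is again continuous and piecewise affine.

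The final step is to add the two maps. Since the sum of two continuous piecewise affine functions is continuous and piecewise affine — one passes to the common refinement of the underlying polyhedral partitions, on each cell of which both summands, and therefore their sum, are affine, while continuity is inherited because each summand is continuous — the full right-hand side $\Ab\xb + \Bb\,\vb(\xb)$ is continuous and piecewise affine in $\xb$, which is exactly the assertion. I expect essentially no obstacle here, consistent with the authors' description of the result as trivial: the only point meriting a moment's care is the bookkeeping of the polyhedral partition under addition, and even that collapses, because $\Ab\xb$ is affine on all of $\R^r$ and the relevant partition is simply the one already furnished by $\vb$ through the proposition.
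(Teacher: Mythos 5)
Your proof is correct and matches the paper's reasoning exactly: the paper does not even write out a formal proof, stating only that the dynamics \eqref{eq:augmentedSys} ``inherit'' continuity and piecewise affinity from $\vb(\xb)$, which is precisely your decomposition into the linear term $\Ab\xb$ plus the term $\Bb\,\vb(\xb)$ combined by closure under addition. Your write-up simply makes explicit (common refinement of partitions, preservation under left-multiplication by a constant matrix) what the authors dismiss as trivial.
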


\begin{rem}
\label{rem:augmentedSystems}
In \cite{SchulzeDarup2019_ECC_ADMM}, the special case $M=1$ is considered.
The corresponding closed-loop dynamics are captured by an augmented system that is similar to but slightly different from \eqref{eq:augmentedSys}. To identify the connection between the systems in \eqref{eq:augmentedSys} and \cite[Eq.~(21)]{SchulzeDarup2019_ECC_ADMM}, we note that $\mu^{(1)}=\rho \Kb^{(1)} \xb - \rho z^{(1)}$ and consequently
\begin{equation}
\label{eq:mukPlus1M1}
\mu^{(0)}(k+1)=D_z \mu^{(1)}(k) = \rho D_z \Kb^{(1)} \xb(k) - \rho D_z z^{(1)}.
\end{equation}
for $M=1$. 
Relation \eqref{eq:mukPlus1M1} allows to formulate the augmented system \eqref{eq:augmentedSys} more compactly with an augmented control action that only depends on $z^{(1)}$. This modification leads to the augmented system in \cite[Eq.~(21)]{SchulzeDarup2019_ECC_ADMM}.
\end{rem}

\section{Linear dynamics around the augmented origin}
\label{sec:LinearAroundOrigin}

The augmented input $\vb ( \xb)$ results from $M$ ADMM iterations. In every iteration, evaluating $z^{(j+1)}$  includes a projection. Apart from the projection,
the iterations~\eqref{eq:iterationsFinal} are linear.
We next derive conditions under which the projections are effectless (in the sense that $\proj_\ZSet(z)=z$) and, hence, under which $\vb(\xb)$ is linear in $\xb$. 
As a preparation, we define the matrices 
\begin{equation}
\label{eq:Kj}
\Kb^{(j)}:=\begin{pmatrix} \left( \sum_{i=0}^{j-1} (\rho E_{11})^i \right) \!E_{12} F &\quad (\rho E_{11})^j &\quad  (\rho E_{11})^{j-1} (\frac{1}{\rho} I_q - E_{11})
 \end{pmatrix}
\end{equation}
for every $j \in \{1,\dots, M\}$, where we note that~\eqref{eq:Kj} for $j=1$ is in line with the definition of $\Kb^{(1)}$ in~\eqref{eq:K1}.
These matrices are instrumental for the following result.

\begin{prop}
\label{prop:zjLinearMujZero}
Let $M \in \N$ with $M\geq 1$ and let $\xb \in \R^r$ be such that
\begin{equation}
\label{eq:conditionKjInZ}
\Kb^{(j)} \xb \in \ZSet 
\end{equation}
for every $j \in \{1,\dots, M\}$. Then,
\begin{equation}
\label{eq:zjLinearMu0}
z^{(j)} = \Kb^{(j)} \xb  \qquad \text{and} \qquad \mu^{(j)} = 0_q
\end{equation}
for every $j \in \{1,\dots, M\}$.
\end{prop}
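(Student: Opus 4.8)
The plan is to establish \eqref{eq:zjLinearMu0} by induction on $j$, exploiting the reformulation \eqref{eq:zjMujPlus1} of the ADMM iterations together with the elementary fact that the projection in \eqref{eq:zjMujPlus1} acts as the identity on its argument whenever that argument already lies in $\ZSet$, as is immediate from the componentwise description \eqref{eq:projZz}. The hypothesis \eqref{eq:conditionKjInZ} is precisely what guarantees that every projection encountered for $j \in \{1,\dots,M\}$ is effectless. For the base case $j=1$, I note that $z^{(0)}$ and $\mu^{(0)}$ are the lower blocks of $\xb$, so the argument of the projection in \eqref{eq:zjMujPlus1} equals $\Kb^{(1)} \xb$ by the definition \eqref{eq:K1}. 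Since $\Kb^{(1)} \xb \in \ZSet$ by \eqref{eq:conditionKjInZ}, the projection leaves it unchanged, giving $z^{(1)} = \Kb^{(1)} \xb$, and then the second relation in \eqref{eq:zjMujPlus1} yields $\mu^{(1)} = \rho(\Kb^{(1)}\xb - z^{(1)}) = 0_q$.

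For the inductive step, suppose $z^{(j)} = \Kb^{(j)}\xb$ and $\mu^{(j)} = 0_q$ for some $j$ with $1 \leq j < M$. Substituting these into the projection argument of \eqref{eq:zjMujPlus1} and reading off the block structure of $\Kb^{(1)}$ from \eqref{eq:K1}, the $\mu^{(j)}$-block drops out and the remaining expression is $E_{12}F x + \rho E_{11}\Kb^{(j)}\xb$. The heart of the argument is to verify that this equals $\Kb^{(j+1)}\xb$. Collecting the three block contributions against \eqref{eq:Kj}, the $z^{(0)}$-block becomes $\rho E_{11}(\rho E_{11})^j = (\rho E_{11})^{j+1}$, the $\mu^{(0)}$-block becomes $\rho E_{11}(\rho E_{11})^{j-1}(\frac{1}{\rho}I_q - E_{11}) = (\rho E_{11})^{j}(\frac{1}{\rho}I_q - E_{11})$, and the $x$-block follows from the matrix geometric-series identity $I_q + \rho E_{11}\sum_{i=0}^{j-1}(\rho E_{11})^i = \sum_{i=0}^{j}(\rho E_{11})^i$. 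Matching all three blocks against \eqref{eq:Kj} at index $j+1$ confirms that the argument equals $\Kb^{(j+1)}\xb$. Invoking \eqref{eq:conditionKjInZ} for the index $j+1$ (legitimate since $j+1 \leq M$), the projection is again effectless, so $z^{(j+1)} = \Kb^{(j+1)}\xb$, and hence $\mu^{(j+1)} = \rho(\Kb^{(j+1)}\xb - z^{(j+1)}) = 0_q$, which completes the induction.

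The only genuinely computational step is the block-wise identification $E_{12}F x + \rho E_{11}\Kb^{(j)}\xb = \Kb^{(j+1)}\xb$, which I expect to be the main obstacle to carry out cleanly; the $x$-block in particular requires the telescoping of the matrix geometric series, which is exactly why the partial sum $\sum_{i=0}^{j-1}(\rho E_{11})^i$ was built into the definition \eqref{eq:Kj}. Everything else reduces to the effectless-projection observation and routine bookkeeping of the three blocks.
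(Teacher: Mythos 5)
Your proposal is correct and follows essentially the same route as the paper's proof: induction on $j$ with effectless projections, where the key step is verifying $E_{12}Fx + \rho E_{11}\Kb^{(j)}\xb = \Kb^{(j+1)}\xb$ block by block, exactly as in the paper's relation \eqref{eq:KjPlus1x}. Your geometric-series identity for the $x$-block is the same computation the paper carries out when it rewrites $E_{12}F + \bigl(\sum_{i=1}^{j}(\rho E_{11})^i\bigr)E_{12}F$ as the $x$-block of $\Kb^{(j+1)}$.
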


\begin{proof}
We prove the claim by induction. For $j=1$, we have  $z^{(1)}=\proj_{\ZSet}(\Kb^{(1)} \xb)$ and $\mu^{(1)}=\rho (\Kb^{(1)} \xb -  z^{(1)})$  according to~\eqref{eq:zjMujPlus1}. 
Now, \eqref{eq:conditionKjInZ} implies $z^{(1)}=\Kb^{(1)} \xb$ and consequently  $\mu^{(1)}=0_q$ as proposed. 
 We next show that \eqref{eq:zjLinearMu0} holds for $j+1$ if it holds for $j$.
To this end, we first note that satisfaction of \eqref{eq:zjLinearMu0} for $j$ implies
\begin{equation}
\label{eq:zjPlus1K1Kjx}
z^{(j+1)}= \proj_{\ZSet} \left(  \Kb^{(1)} \begin{pmatrix}
x \\
z^{(j)}\\
\mu^{(j)}
\end{pmatrix}  \right) = \proj_{\ZSet} \left(  \Kb^{(1)} \begin{pmatrix}
x \\
\Kb^{(j)} \xb \\
0_{q}
\end{pmatrix}  \right).
\end{equation}
By definition of $\Kb^{(j)}$, we further obtain
\begin{align}
\nonumber
\Kb^{(1)} \!\begin{pmatrix}
x \\
\Kb^{(j)} \xb \\
0_{q}
\end{pmatrix} &=  \begin{pmatrix}
E_{12} F &\,\,\,\, \rho E_{11} 
\end{pmatrix} \begin{pmatrix}
x \\
\Kb^{(j)} \xb
\end{pmatrix}  \\
\nonumber
&= \begin{pmatrix} \!E_{12} F \!+  \!\left( \sum_{i=1}^{j} (\rho E_{11})^{i} \right)\! E_{12} F  &\,\,\, (\rho E_{11})^{j+1} &\,\,\,  (\rho E_{11})^{j} (\frac{1}{\rho}I_q - E_{11})\! 
 \end{pmatrix} \xb \\
  \label{eq:KjPlus1x}
 &= \Kb^{(j+1)} \xb.
\end{align}
Substituting \eqref{eq:KjPlus1x} in \eqref{eq:zjPlus1K1Kjx} and using \eqref{eq:conditionKjInZ} (for $j+1$) implies
\begin{equation}
\label{eq:zjPlus1Linear}
z^{(j+1)} = \proj_{\ZSet} \left(  \Kb^{(j+1)} \xb  \right) = \Kb^{(j+1)} \xb
\end{equation}
as proposed. 
It remains to prove $\mu^{(j+1)} = 0_q$, which follows from
$$
\mu^{(j+1)} = \rho \left(  \Kb^{(1)} \begin{pmatrix}
x \\
z^{(j)} \\
\mu^{(j)}
\end{pmatrix} -z^{(j+1)} \right) = \rho \left(  \Kb^{(1)} \begin{pmatrix}
x \\
\Kb^{(j)} \xb\\
0_q
\end{pmatrix} -\Kb^{(j+1)} \xb \right) = 0_q,
$$
where we used \eqref{eq:zjLinearMu0} for $j$ and relations \eqref{eq:zjMujPlus1},  \eqref{eq:KjPlus1x}, and \eqref{eq:zjPlus1Linear}.
\end{proof}

Obviously, conditions~\eqref{eq:conditionKjInZ} are satisfied for $\xb = 0_r$. Not too surprisingly, the conditions are also satisfied in a neighborhood of the augmented origin and we will investigate this neighborhood in more detail in the next section.
For now, we study the effect of the conditions~\eqref{eq:conditionKjInZ} on the closed-loop dynamics. According to Proposition~\ref{prop:zjLinearMujZero}, having~\eqref{eq:conditionKjInZ} implies 
$$
\vb(\xb)=\begin{pmatrix}
\Kb^{(M)}  \\
0_{q \times r} 
\end{pmatrix} \xb
$$
for $M\geq 1$. Hence, around the augmented origin, the piecewise affine dynamics of the augmented system~\eqref{eq:augmentedSys} turn into the linear dynamics
\begin{equation}
\label{eq:linearDynamics}
\xb(k+1) = \Sb_M \xb(k),
\end{equation}
where
\begin{align}
\label{eq:SM}
\Sb_M &:=\Ab + \Bb \begin{pmatrix}
\Kb^{(M)}  \\
0_{q \times r} 
\end{pmatrix} \\
\nonumber
&= \!\begin{pmatrix}
A + B C_u \left( \sum_{i=0}^{M-1} (\rho E_{11})^i \right) \!E_{12} F & B C_u (\rho E_{11})^M   & B C_u  (\rho E_{11})^{M-1} (\frac{1}{\rho} I_q -  E_{11}) \\
D_z \left( \sum_{i=0}^{M-1} (\rho E_{11})^i \right) \!E_{12} F & D_z  (\rho E_{11})^M  &D_z  (\rho E_{11})^{M-1} (\frac{1}{\rho} I_q -  E_{11}) \\
0_{q \times n} &  0_{q\times q} &  0_{q\times q}
\end{pmatrix}\!.
\end{align}
Clearly, the eigenvalues of the matrix $\Sb_M$ determine whether the closed-loop behavior around the augmented origin is asymptotically stable or not. 
At this point, we observe that three categories of parameters have an effect on $\Sb_M$. First, the parameters of the original system in terms of $A$ and $B$. Second, the parameters $Q$, $R$, and $N$ of the MPC. Third, the parameters $\rho$,  $M$, and $D_z$ of the ADMM scheme. We note that $C_u$ is not counted as a parameter since
there seems to be no competitive alternative to the choice in \eqref{eq:uADMM}.
We further note that the update matrix $D_\mu$ has no effect on $\Sb_M$. 

Since we are dealing with a stabilizable pair $(A,B)$, an MPC parametrized as in Section~\ref{sec:Background} is stabilizing in some neighborhood around the origin for every prediction horizon $N\geq 1$. In particular, this neighborhood includes 
the set, where the MPC acts identical to the linear quadratic regulator (LQR), i.e., where the MPC law~\eqref{eq:optimalInput} is equivalent to
\begin{equation}
\label{eq:KLQR}
u(k)=K x(k) \qquad \text{with} \qquad K:=-(R + B^\top PB)^{-1} B^\top P A.
\end{equation}
Hence, instability can only result from inappropriate choices for $\rho$, $D_z$, and $M$. Based on this observation, it is obviously interesting to study the effect of the ADMM parameters on the eigenvalues of $\Sb_M$.
As a step in this direction, we next present two fundamental results: A general lower bound for the number of zero eigenvalues of $\Sb_M$   is given in Proposition~\ref{prop:eigenvaluesSM} and the existence of a stabilizing parameter set in terms of $\rho$, $D_z$, and $M$ is guaranteed by Proposition~\ref{prop:stableParameters}.

\begin{prop}
\label{prop:eigenvaluesSM}
Let $\rho \in \R$ with $\rho>0$,  let $M \in \N$ with $M \geq 1$, and let $D_z \in \R^{q \times q}$. Then, $\Sb_M$ as in~\eqref{eq:SM} has at least $q+p-n=(2 N-1) n + N m$ zero eigenvalues.
\end{prop}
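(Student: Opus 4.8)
The plan is to exploit the vanishing third block row of $\Sb_M$ in~\eqref{eq:SM} to first split off $q$ zero eigenvalues, and then to bound the rank of the remaining block. Writing $\lambda I_r-\Sb_M$ in the same $3\times 3$ partition with block sizes $n,q,q$, its bottom block row is $(0_{q\times n},0_{q\times q},\lambda I_q)$. Grouping the first two block rows and columns therefore shows that $\lambda I_r-\Sb_M$ is block upper triangular, whence $\det(\lambda I_r-\Sb_M)=\lambda^{q}\det(\lambda I_{n+q}-\widetilde{\Sb}_M)$, where $\widetilde{\Sb}_M$ denotes the leading $(n+q)\times(n+q)$ submatrix of $\Sb_M$, i.e.\ the block formed by the first two block rows and first two block columns in~\eqref{eq:SM}. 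This already exhibits $q$ zero eigenvalues, so it remains to show that $\widetilde{\Sb}_M$ contributes at least $p-n$ further zero eigenvalues.

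For the latter I would bound $\rank(\widetilde{\Sb}_M)$, since $\rank(\widetilde{\Sb}_M)\le 2n+Nm$ yields a kernel of dimension at least $(n+q)-(2n+Nm)=p-n$ (using $q-p=Nm$), hence at least $p-n$ zero eigenvalues. To this end I would decompose $\widetilde{\Sb}_M$ into the term $\begin{pmatrix}A\\0_{q\times n}\end{pmatrix}\begin{pmatrix}I_n & 0_{n\times q}\end{pmatrix}$, collecting the contribution of $A$ and having rank at most $n$, and the term $\begin{pmatrix}BC_u\\ D_z\end{pmatrix}\begin{pmatrix}(\sum_{i=0}^{M-1}(\rho E_{11})^i)E_{12}F & (\rho E_{11})^M\end{pmatrix}$, collecting the ADMM feedback, whose rank is at most that of its right factor, namely at most $n+\rank((\rho E_{11})^M)$.

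The crux is then the rank estimate $\rank((\rho E_{11})^M)\le Nm$, which I would derive from $GE_{11}=0_{p\times q}$; this relation is exactly the $(2,1)$ block of the identity in $\begin{pmatrix}H+\rho I_q & G^\top\\ G & 0_{p\times p}\end{pmatrix}E=I$, cf.~\eqref{eq:EBlocks}. As $G$ has full row rank $p$ (guaranteed by the staircase of $I_n$ blocks in~\eqref{eq:definitionGF}), the columns of $E_{11}$ lie in the $(q-p)$-dimensional space $\ker G$, so $\rank(E_{11})\le q-p=Nm$ and therefore $\rank((\rho E_{11})^M)\le\rank(E_{11})\le Nm$ for every $M\ge 1$. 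Combining, $\rank(\widetilde{\Sb}_M)\le n+(n+Nm)=2n+Nm$, and together with the $\lambda^{q}$ factor this yields the asserted $q+(p-n)=q+p-n$ zero eigenvalues. The main obstacle, and the reason for peeling off $\lambda^{q}$ before estimating any rank, is that a direct rank bound on the whole $\Sb_M$ fails for small $M$: already for $M=1$ the third block column carries the generically nonsingular factor $\tfrac1\rho I_q-E_{11}$, which would inflate the rank beyond $2n+Nm$; routing the $\mu^{(0)}$-columns into the $\lambda^{q}$ factor removes this term before the rank argument is made.
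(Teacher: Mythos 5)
Your proof is correct, and its overall skeleton coincides with the paper's: you peel off $q$ zero eigenvalues coming from the vanishing bottom block row of $\Sb_M$, and you then bound the rank of the leading $(n+q)\times(n+q)$ block using exactly the same additive decomposition into the $A$-term and the rank-one-factored ADMM feedback term $\bigl(\begin{smallmatrix} B C_u \\ D_z\end{smallmatrix}\bigr)\bigl(\,\cdot\,\bigr)$, arriving at the same bound $2n+Nm$ and the same count $q+p-n$. The genuine difference lies in how you establish the crucial estimate $\rank(E_{11}^M)\leq Nm$. The paper devotes Lemma~\ref{lem:rankE11M} to this, proving the \emph{exact} equality $\rank(E_{11})=q-p$ via the explicit block-inversion formula \eqref{eq:E11}, a congruence with $H+\rho I_q$, and an auxiliary invertible extension $\ol{G}$ of $G$, requiring ``lengthy but basic manipulations.'' You instead derive only the inequality $\rank(E_{11})\leq q-p$, and you get it almost for free: the $(2,1)$ block of the identity $\bigl(\begin{smallmatrix} H+\rho I_q & G^\top \\ G & 0_{p\times p}\end{smallmatrix}\bigr)E=I_{q+p}$ reads $G E_{11}=0_{p\times q}$, so the columns of $E_{11}$ lie in $\ker G$, which has dimension $q-p$ because $G$ in \eqref{eq:definitionGF} has full row rank. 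Since only the upper bound is needed for Proposition~\ref{prop:eigenvaluesSM}, your argument is a strictly more elementary route to the same conclusion; what the paper's lemma buys in exchange is the sharper statement that the rank equals $q-p$ (and its proof machinery, the formula \eqref{eq:E11}, is reused later, e.g.\ in Lemma~\ref{lem:rhoIMinusE11}), whereas your kernel argument is self-contained and avoids the block-inverse formula entirely.
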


We use the following lemma to prove Proposition~\ref{prop:eigenvaluesSM}.

\begin{lem}
\label{lem:rankE11M}
Let $\rho \in \R$ with $\rho>0$ and let $M \in \N$ with $M \geq 1$.
Then, 
\begin{equation}
\label{eq:rankE11Bound}
\rank(E_{11}^M )\leq \rank(E_{11}) = q - p = N m
\end{equation}
holds for $E_{11}$ as in~\eqref{eq:EBlocks}.
\end{lem}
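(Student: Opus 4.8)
The plan is to treat the two parts of \eqref{eq:rankE11Bound} separately. The inequality $\rank(E_{11}^M) \le \rank(E_{11})$ is immediate from the fact that $\rank(AB)\le\min\{\rank(A),\rank(B)\}$: since $E_{11}^M = E_{11}\,E_{11}^{M-1}$, we get $\rank(E_{11}^M) \le \rank(E_{11})$ for every $M\ge 1$. The substantive claim is therefore the equality $\rank(E_{11}) = q-p = Nm$, and I would establish it by computing $E_{11}$ explicitly from the block inverse in \eqref{eq:EBlocks}.

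First I would record two structural facts. The Hessian $H$ in \eqref{eq:QPCost} is block diagonal with blocks $R\succ 0$, $Q=L^\top L\succeq 0$, and the terminal block $P\succ 0$, hence $H\succeq 0$; together with $\rho>0$ this makes $W:=H+\rho I_q$ positive definite, in particular invertible, with a well-defined symmetric positive-definite square root $W^{1/2}$. Second, the matrix $G$ in \eqref{eq:definitionGF} has full row rank $p$: its staircase structure exhibits an $I_n$ block in each block-row with no competing entries in the corresponding block-columns, so its $p$ rows are linearly independent. Consequently $S:=GW^{-1}G^\top$ is $p\times p$ and positive definite, hence invertible.

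With $W$ and $S$ invertible, the standard block-inverse (saddle-point) formula applied to the KKT matrix in \eqref{eq:EBlocks} yields
$$
E_{11} = W^{-1} - W^{-1}G^\top S^{-1} G W^{-1}.
$$
I would then factor out the square root,
$$
E_{11} = W^{-1/2}\bigl(I_q - P\bigr)W^{-1/2}, \qquad P := M_0^\top\bigl(M_0 M_0^\top\bigr)^{-1}M_0, \quad M_0 := G W^{-1/2},
$$
and observe that $P$ is the orthogonal projector onto the row space of $M_0$: it is symmetric and satisfies $P^2=P$, so $\rank(P)=\rank(M_0)=\rank(G)=p$ because $W^{-1/2}$ is nonsingular. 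Hence $I_q-P$ is the complementary orthogonal projector, of rank $q-p$, and since congruence by the invertible symmetric matrix $W^{-1/2}$ preserves rank, $\rank(E_{11})=\rank(I_q-P)=q-p$. Finally $q-p=(p+Nm)-p=Nm$ by $q=p+Nm$, which closes \eqref{eq:rankE11Bound}.

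I expect the main obstacle to be bookkeeping rather than conceptual: justifying the block-inverse formula (equivalently, verifying the invertibility hypotheses $W\succ 0$ and $S\succ 0$) and then correctly identifying $P$ as an orthogonal projector, so that the rank count $q-p$ is exact and not merely an upper bound. A quicker route to the upper bound $\rank(E_{11})\le q-p$ alone is to read off $E_{11}G^\top=0$ from the $(1,2)$ block of $E K = I$, which forces the $p$-dimensional row space of $G$ into $\ker(E_{11})$; but obtaining the matching lower bound still requires the explicit expression above, so I would commit to the projection computation from the outset.
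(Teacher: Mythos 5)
Your proof is correct, and it diverges from the paper's in the one step that actually requires work. Both proofs dispose of $\rank(E_{11}^M)\le\rank(E_{11})$ by the standard product-rank inequality and both invoke the same Schur-complement/block-inverse formula $E_{11}=W^{-1}-W^{-1}G^\top(GW^{-1}G^\top)^{-1}GW^{-1}$ with $W:=H+\rho I_q$ (the paper cites Lu and Shiou for it, as in \eqref{eq:E11}). The difference is how the rank of this expression is pinned down to exactly $q-p$: the paper performs the congruence $\rank(E_{11})=\rank(WE_{11}W)$, completes $G$ to an invertible matrix $\ol{G}=\bigl(\begin{smallmatrix}G\\ \Delta G\end{smallmatrix}\bigr)$, rewrites $W=\ol{G}^\top\bigl(\ol{G}W^{-1}\ol{G}^\top\bigr)^{-1}\ol{G}$, and after admittedly ``lengthy but basic manipulations'' reduces the rank to $\rank(\Delta G)=q-p$; you instead conjugate by the symmetric square root, $E_{11}=W^{-1/2}(I_q-P)W^{-1/2}$ with $P$ the orthogonal projector onto the row space of $GW^{-1/2}$, so that the rank is read off immediately as that of a complementary projector. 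Your route is cleaner: it replaces the auxiliary completion $\Delta G$ and the unstated manipulations with a one-line projector identity, and your closing observation that $E_{11}G^\top=0_{q\times p}$ (from the $(1,2)$ block of $E$ times the KKT matrix) gives the upper bound $\rank(E_{11})\le q-p$ essentially for free. One small inaccuracy: your justification that $G$ has full row rank claims the $I_n$ block in each block-row has ``no competing entries in the corresponding block-columns,'' which is false --- the $I_n$ in block-row $k$ (the $\hat{x}(k)$ column) sits above a $-A$ from block-row $k+1$. The conclusion still holds, e.g.\ because the $p\times p$ submatrix of \eqref{eq:definitionGF} formed by the state columns is block lower-triangular with $I_n$ diagonal blocks, hence invertible; the paper is equally terse on this point, so this is cosmetic rather than a gap.
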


\begin{proof}
Clearly, the first inequality in~\eqref{eq:rankE11Bound} represents a standard result and the last relation in~\eqref{eq:rankE11Bound} holds by definition of $q$ and $p$. Hence, it remains to prove  $\rank(E_{11}) = q - p$. To this end, we note that $E$ as in~\eqref{eq:EBlocks} results from inverting a $2 \times 2$ block matrix, where the upper-left block (i.e., $H+\rho I_q$) is invertible and where the off-diagonal blocks (i.e., $G^\top$ and $G$) have full rank as apparent from~\eqref{eq:definitionGF}. Thus, we obtain
\begin{equation}
\label{eq:E11}
E_{11} = (H+\rho I_q)^{-1} -   (H+\rho I_q)^{-1} G^\top  \left( G (H+\rho I_q)^{-1} G^\top \right)^{-1} \!G  (H+\rho I_q)^{-1}
\end{equation}
(see, e.g., \cite[Thm.~2.1]{Lu2002}).
Due to $\rank(H+\rho I_q)=q$ and $E_{11} \in \R^{q \times q}$, we next find
\begin{align}
\nonumber
\rank(E_{11})   &= \rank \left( (H+\rho I_q) E_{11} (H+\rho I_q) \right) \\
\label{eq:rankExpression}
&= \rank \left( H+\rho I_q -   G^\top  \left( G (H+\rho I_q)^{-1} G^\top \right)^{-1} \!G \right). 
\end{align}
To further investigate~\eqref{eq:rankExpression}, we choose the matrix $\Delta G \in \R^{(q-p) \times q}$ such that
$$
\ol{G}:=\begin{pmatrix}
G \\
\Delta G
\end{pmatrix}
$$
is invertible, where we note that such a choice is always possible. Using the latter matrix, $H+\rho I_q$ can be rewritten as
\begin{equation}
\label{eq:HrhoG}
 H+\rho I_q=\ol{G}^\top \ol{G}^{-\top} ( H+\rho I_q) \ol{G}^{-1} \ol{G} = \ol{G}^\top \left( \ol{G} ( H+\rho I_q)^{-1} \ol{G}^{\top} \right)^{-1} \ol{G}.
\end{equation}
After substituting \eqref{eq:HrhoG} in~\eqref{eq:rankExpression} and some lengthy but basic manipulations, we find
$$
\rank \left( H+\rho I_q -   G^\top  \left( G (H+\rho I_q)^{-1} G^\top \right)^{-1} \!G \right) = \rank \left( \Delta G \right).
$$ 
This completes the proof since $\rank \left( \Delta G \right)=q-p$.
\end{proof}

\begin{proof}[of Prop.~\ref{prop:eigenvaluesSM}]
To prove the claim, we first note that $\Sb_M$ has obviously at least $q$ zero eigenvalues since the last $q$ rows of $\Sb_M$ consist of zero entries. The remaining $q+n$ eigenvalues correspond to the eigenvalues of the upper-left submatrix
\begin{align*}
\ul{\Sb}_M &:= \begin{pmatrix}
A + B C_u \left( \sum_{i=0}^{M-1} (\rho E_{11})^i \right) \!E_{12} F &\,\,\,\,  B C_u (\rho E_{11})^M    \\
D_z \left( \sum_{i=0}^{M-1} (\rho E_{11})^i \right) \!E_{12} F &\,\,\,\,  D_z  (\rho E_{11})^M  
\end{pmatrix} \\
&=\begin{pmatrix}
A & 0_{n\times q}  \\
0_{q \times n} & 0_{q\times q}  
\end{pmatrix} + \begin{pmatrix}
B C_u \\
D_z 
\end{pmatrix} \begin{pmatrix}
\left( \sum_{i=0}^{M-1} (\rho E_{11})^i \right) \!E_{12} F &\quad (\rho E_{11})^M 
\end{pmatrix}.
\end{align*}
Using standard rank inequalities (see, e.g, \cite[p.~13]{Horn1985}) and the result from Lemma~\ref{lem:rankE11M}, we find
\begin{align*}
\rank(\ul{\Sb}_M) &\leq \rank(A) + \min \left\{ \rank \left( \!\begin{pmatrix}
B C_u \\
D_z 
\end{pmatrix}\!\right)\!,\, \rank \left( \begin{pmatrix}
\left( \sum_{i=0}^{M-1} \rho^i E_{11}^i \right) \!E_{12} F &\,\,\, \rho^M E_{11}^M 
\end{pmatrix} \right) \right\} \\
&\leq n + \min \left\{ q, n + \rank(E_{11}^M) \right\} \leq n + \min\{q,n+q-p \} = 2 n + Nm. 
\end{align*}
Hence, $\ul{\Sb}_M$ has at least $q+n-2n - Nm = (N-1)n$ zero eigenvalues, which implies that $\Sb_M$ has at least $(N-1)n+q=(2N-1)n+Nm$ zero eigenvalues. 
\end{proof}

\begin{prop}
\label{prop:stableParameters}
Let $M = 1$ and let $D_z \in \R^{q \times q}$. Then, there exists a $\rho>0$ such that $\Sb_M$ is Schur stable.
\end{prop}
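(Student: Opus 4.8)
The plan is to reduce the $r\times r$ stability question to an $(n+q)\times(n+q)$ one and then pass to the limit $\rho\to 0^+$, in which the surviving nonzero dynamics decouple from $D_z$ and collapse to the LQR closed loop. First I would exploit the block structure of $\Sb_1$. Since its last $q$ rows vanish, $\Sb_1$ is block upper triangular with respect to the partition into the first $n+q$ and the last $q$ coordinates, namely
\[
\Sb_1=\begin{pmatrix}\ul{\Sb}_1 & C\\ 0_{q\times(n+q)} & 0_{q\times q}\end{pmatrix},\qquad \ul{\Sb}_1:=\begin{pmatrix}A+BC_uE_{12}F & \rho BC_uE_{11}\\ D_zE_{12}F & \rho D_zE_{11}\end{pmatrix},
\]
with $\ul{\Sb}_1$ exactly the upper-left submatrix from the proof of Proposition~\ref{prop:eigenvaluesSM} and some matrix $C$. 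Hence $\det(\lambda I_r-\Sb_1)=\lambda^q\det(\lambda I_{n+q}-\ul{\Sb}_1)$, so the spectrum of $\Sb_1$ is that of $\ul{\Sb}_1$ together with $q$ extra zeros. It therefore suffices to exhibit a $\rho>0$ making $\ul{\Sb}_1$ Schur stable.

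Next I would study $\ul{\Sb}_1$ as a function of $\rho$ and take $\rho\to 0^+$. The blocks $E_{11},E_{12}$ are entries of the inverse in \eqref{eq:EBlocks}, i.e.\ of the KKT matrix $\begin{pmatrix}H+\rho I_q & G^\top\\ G & 0\end{pmatrix}$. At $\rho=0$ this matrix is still invertible: $G$ has full row rank by \eqref{eq:definitionGF}, and $H$ is positive definite on $\ker G$ because any $z\in\ker G$ encodes a dynamically feasible trajectory started at $x=0$, whose cost $\tfrac12 z^\top Hz$ contains $\sum_k \hat{u}(k)^\top R\,\hat{u}(k)$ and hence is strictly positive for $z\neq0$ by $R\succ0$. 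Consequently $E_{11}(\rho)$ and $E_{12}(\rho)$ extend continuously (in fact analytically) to $\rho=0$, so $\rho E_{11}(\rho)\to 0$ and $E_{12}(\rho)\to E_{12}(0)$. The entire second block column of $\ul{\Sb}_1$ then vanishes in the limit, leaving the block lower triangular matrix $\begin{pmatrix}A+BC_uE_{12}(0)F & 0\\ D_zE_{12}(0)F & 0\end{pmatrix}$, whose eigenvalues are those of $A+BC_uE_{12}(0)F$ together with $q$ zeros --- crucially, independent of $D_z$.

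Finally I would identify $A+BC_uE_{12}(0)F$ with the LQR closed loop. At $\rho=0$ (with $\mu^{(0)}=0$, which the linear iterations render invariant) the vector $E_{12}(0)Fx$ is exactly the minimizer of $\tfrac12 y^\top Hy$ subject to $Gy=Fx$, i.e.\ the solution of the OCP \eqref{eq:OCP} with the box constraints dropped. Because the terminal weight $P$ solves the DARE \eqref{eq:DARE}, a standard dynamic programming argument shows the first optimal input of this unconstrained finite-horizon problem is the stationary move, so $C_uE_{12}(0)F=K$ with $K$ as in \eqref{eq:KLQR}. The limit matrix is thus $A+BK$, which is Schur stable under the standing assumptions. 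Since the eigenvalues of $\ul{\Sb}_1$ depend continuously on $\rho\in[0,\infty)$ and all $n+q$ of them lie strictly inside the unit disk at $\rho=0$ (the $n$ eigenvalues of $A+BK$ together with $q$ zeros), they remain there for all sufficiently small $\rho>0$; for any such $\rho$, $\ul{\Sb}_1$ and hence $\Sb_1$ is Schur stable. I expect the \emph{main obstacle} to be the limit computation of the second paragraph: verifying that the KKT inverse is well defined at $\rho=0$ and, above all, recognizing that the $\rho\to0$ single iteration reproduces the unconstrained MPC whose first input is the LQR gain. Once this identification is in place, the $D_z$-independence of the surviving spectrum and the continuity of eigenvalues complete the argument.
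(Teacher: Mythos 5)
Your proof is correct and follows essentially the same route as the paper's: reduce Schur stability of $\Sb_1$ to that of the upper-left block $\ul{\Sb}_1$, pass to the limit $\rho \to 0^+$ in which the $E$-blocks converge to the inverse KKT matrix of the unconstrained QP and the $D_z$-dependent column vanishes, identify $A + B C_u E_{12}^\ast F$ with the LQR closed loop $A+BK$ via the DARE, and conclude by continuity of the spectrum in $\rho$. The only difference is that you explicitly verify two points the paper leaves implicit (invertibility of the KKT matrix at $\rho=0$, and the dynamic-programming justification of $C_u E_{12}^\ast F = K$), which strengthens rather than changes the argument.
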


\begin{proof}
Clearly, $\Sb_M$ is Schur stable if and only if $\ul{\Sb}_M$ is Schur stable. Now, for $M=1$, we have
\begin{equation}
\label{eq:S1}
\ul{\Sb}_1 = \begin{pmatrix}
A + B C_u E_{12} F &\,\,\,\, \rho B C_u E_{11}    \\
D_z E_{12} F &\,\,\,\, \rho D_z  E_{11}  
\end{pmatrix}.
\end{equation}
 It remains to show that there exists an $\rho>0$ such that $\ul{\Sb}_1$ is Schur stable, where we note that $E_{11}$ and $E_{12}$ depend on $\rho$ as apparent from~\eqref{eq:EBlocks}. It will turn out that a sufficiently small $\rho$ implies Schur stability of $\ul{\Sb}_1$. In this context, we study the limit $\lim_{\rho \rightarrow 0} E$ and find
 \begin{equation}
\label{eq:ERho0}
\begin{pmatrix}
E_{11}^\ast & E_{12}^\ast \\
(E_{12}^\ast)^\top & E_{22}^\ast
\end{pmatrix} := \lim_{\rho \rightarrow 0} \begin{pmatrix}
E_{11} & E_{12} \\
E_{12}^\top & E_{22}
\end{pmatrix} = \begin{pmatrix}
H &\,\,\, G^\top \\
G &\,\,\, 0_{p \times p} 
\end{pmatrix}^{-1}
\end{equation}
according to \eqref{eq:EBlocks}. Based on this result in combination with~\eqref{eq:S1}, we infer
\begin{equation}
\label{eq:limitS1Rho0}
\ul{\Sb}_1^\ast:=\lim_{\rho \rightarrow 0} \,\, \ul{\Sb}_1 =  \begin{pmatrix}
A + B C_u E_{12}^\ast F &\,\,\,\, 0_{n \times q}    \\
D_z E_{12}^\ast F &\,\,\,\, 0_{q \times q} 
\end{pmatrix}.
\end{equation}
As a consequence, for every $\delta >0$, there exists a $\rho>0$ such that $\| \ul{\Sb}_1 - \ul{\Sb}_1^\ast\| < \delta$ for some matrix norm $\|\cdot\|$. Moreover, since the spectrum of a  matrix depends continuously on its entries (see, e.g., \cite{Elsner1982}), for every $\epsilon>0$, there exists a $\delta >0$ such that $\| \ul{\Sb}_1 - \ul{\Sb}_1^\ast\| < \delta$ implies
\begin{equation}
\label{eq:eigenvaluesS1Epsilon}
\max_i \, \min_j \, |\lambda_i - \lambda_j^\ast | < \epsilon,
\end{equation}  
where $\lambda_i$ and $\lambda_j^\ast$ denote the $i$-th and $j$-th eigenvalue of $\ul{\Sb}_1$ and $\ul{\Sb}_1^\ast$, respectively.
In summary, for every $\epsilon>0$, there exists a $\rho>0$ such that~\eqref{eq:eigenvaluesS1Epsilon} holds.
We next show that $\ul{\Sb}_1^\ast$ is Schur stable. This completes the proof since \eqref{eq:eigenvaluesS1Epsilon} implies that, for a sufficiently small $\rho$, the eigenvalues of $\ul{\Sb}_1$ get arbitrarily close to those of $\ul{\Sb}_1^\ast$.
Now, it is apparent from \eqref{eq:limitS1Rho0} that $\ul{\Sb}_1^\ast$ is Schur stable if and only if $A + B C_u E_{12}^\ast F$ is Schur stable.
At this point, we note that the matrix on the right-hand side of~\eqref{eq:ERho0} is related to the unconstrained solution of the original QP~\eqref{eq:QP}. In fact, the optimizer of~\eqref{eq:QP} subject to $z\in \R^q$ instead of $z \in \ZSet$ satisfies 
\begin{equation}
\label{eq:unconstrainedZOpt}
\begin{pmatrix}
H &\,\,\, G^\top \\
G &\,\,\, 0_{p \times p} 
\end{pmatrix} \begin{pmatrix}
z^\ast \\
\ast
\end{pmatrix} = \begin{pmatrix}
0 \\
F x
\end{pmatrix}.
\end{equation}
The unconstrained solution of~\eqref{eq:QP}  is, on its own, related to the LQR. In fact, we have $C_u z^\ast = K x$ for $z^\ast$ as in~\eqref{eq:unconstrainedZOpt}. Taking \eqref{eq:ERho0} into account, we further obtain $z^\ast =E^\ast_{12} F x$. Hence,  $K=C_u E^\ast_{12} F$ and consequently
$A + B C_u E_{12}^\ast F = A + B K$. Clearly, $A + B K$ is Schur stable since the LQR stabilizes every stabilizable pair $(A,B)$.
\end{proof}

\section{Positive invariance around the augmented origin}
\label{sec:positiveInvariance}

In the previous section, we showed that the closed-loop behavior around the (augmented) origin obeys the linear dynamics \eqref{eq:linearDynamics}. In this section, we  analyze the neighborhood where \eqref{eq:linearDynamics} applies in more detail. The starting point for this analysis are the conditions \eqref{eq:conditionKjInZ} that imply linearity according to Proposition~\ref{prop:zjLinearMujZero}. As a consequence, the dynamics \eqref{eq:linearDynamics} apply to all states $\xb$ in the set
$$
\KSet_M := \left\{ \xb\in \R^r \,|\, \Kb^{(j)} \xb \in \ZSet, \, \forall j \in \{1,\dots,M\} \right\}.
$$
Unfortunately, having $\xb \in \KSet_M$ does not imply $\xb^+ :=\Sb_M \xb \in \KSet_M$. Hence, we next study the largest positively invariant set (for the linear dynamics) contained in $\KSet_M$. Clearly, this set corresponds to
 \begin{equation}
 \label{eq:setPM}
 \PSet_M:=\{ \xb\in \R^r \,|\, \Sb_M^k \xb \in \KSet_M , \, \forall k \in \N \}.
 \end{equation} 
Now, assume an augmented state trajectory enters $ \PSet_M$ at step $k^\ast$, i.e., $\xb(k^\ast) \in \PSet_M$. Then, all subsequent inputs, i.e.,
 \begin{equation}
 \label{eq:inputsLinear}
u(k)=C_u \Kb^{(M)} \Sb_M^{k-k^\ast} \xb(k^\ast)
 \end{equation}
for every $k\geq k^\ast$, satisfy the constraints $\USet$ by construction. In fact, for $k\geq k^\ast$, we have
$\xb(k)=\Sb_M^{k-k^\ast} \xb(k^\ast) \in \PSet_M$ due to $\xb(k^\ast) \in \PSet_M$ and consequently $\Kb^{(M)} \xb(k) \in \ZSet$ and $C_u \Kb^{(M)} \xb(k) \in \USet$.
However, the original states
 \begin{equation}
 \label{eq:statesLinear}
x(k) = C_x \Sb_M^{k-k^\ast} \xb(k^\ast) \qquad \text{with} \qquad C_x:=\begin{pmatrix}
I_n &\quad 0_{n \times (r-n)}
\end{pmatrix} 
 \end{equation}
 may or may not satisfy the constraints $\XSet$ for $k\geq k^\ast$. 
To compensate for this draw-back, we focus on positively invariant subsets of $
 \PSet_M$ that take the original state constraints explicitly into account.
In addition, it turns out to be useful to restrict our attention to sequences
$$
 z^{(0)}(k) = C_z \Sb_M^{k-k^\ast} \xb(k^\ast) \qquad \text{with} \qquad C_z:=\begin{pmatrix}
0_{q \times n} &\quad I_q &\quad 0_{q \times q}
\end{pmatrix} 
$$
 that satisfy the constraints $\ZSet$. 
Hence, we consider the set
 \begin{equation}
 \label{eq:subsetPM}
 \PSet_M^\ast:=\{ \xb\in \R^r \,|\, \Cb_M \Sb_M^k \xb \in \XSet \times \ZSet \times  \ZSet^M, \, \forall k \in \N \}
 \end{equation} 
with $\ZSet^{M}:=\underbrace{\ZSet \times \dots \times \ZSet}_{(M)-\text{times}}$ and
\vspace{-3mm}
 \begin{equation}
 \label{eq:CM}
\Cb_M := \begin{pmatrix}
C_x \\ C_z \\ \Kb^{(1)} \\ \vdots \\ \Kb^{(M)}.
\end{pmatrix},
 \end{equation}
We note that the first two blocks in $\Cb_M$ incorporate the conditions $C_x \Sb_M^k \xb \in \XSet$ and $C_z \Sb_M^k \xb \in \ZSet$, respectively, whereas the last $M$ blocks refer to $\Sb_M^k \xb \in \KSet_M$. Hence, we have $\PSet_M^\ast\subseteq \PSet_M$ by construction. 
Apparently, the sets \eqref{eq:setPM} and \eqref{eq:subsetPM} are similar to the output admissible sets studied in \cite{Gilbert1991}. According to \cite[Thms.~2.1 and 4.1]{Gilbert1991}, $ \ul{\PSet}_M$ as in~\eqref{eq:subsetPM} is bounded and finitely determined if (i) $\Sb_M$ is Schur stable, (ii) the pair $(\Cb_M,\Sb_M)$ is observable, (iii) $\XSet \times \ZSet \times \ZSet^M$ is bounded, and (iv) $0_{n+(M+1)q}$ is in the interior of $\XSet \times \ZSet \times \ZSet^M$.
In this context, we recall that finite determinedness implies the existence of a finite $\ol{k} \in \N$ such that
$$
  \PSet_M^\ast=\left\{ \xb \in \R^r \,|\, \Cb_M \Sb_M^k \xb \in \XSet \times \ZSet \times \ZSet^M , \, \forall k \in \{0,\dots,\ol{k}\} \right\}.
 $$
Now, $\XSet \times \ZSet \times \ZSet^M$ is bounded and contains the origin as an interior point by construction. We already analyzed Schur stability of $\Sb_M$ in the previous section and found that stabilizing parameters $\rho$, $D_z$, and $M$ always exist (see Prop.~\ref{prop:stableParameters}). Thus, it remains to study observability of $(\Cb_M,\Sb_M)$. 
To this end, we first derive the following lemma.

\begin{lem}
\label{lem:rhoIMinusE11}
Let $\rho \in \R$ with $\rho>0$. Then, $\frac{1}{\rho} I_q -  E_{11}$ is positive definite.
\end{lem}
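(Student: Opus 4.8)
The plan is to combine the explicit expression for $E_{11}$ obtained in the proof of Lemma~\ref{lem:rankE11M} with a splitting of $\frac{1}{\rho}I_q - E_{11}$ into two positive semidefinite pieces, and then to rule out that both pieces vanish simultaneously on a nonzero vector. Abbreviate $W := H + \rho I_q$. Since $H$ is the block-diagonal Hessian associated with the cost~\eqref{eq:stageCost} under the ordering~\eqref{eq:zDefinition} (its diagonal blocks are positive multiples of $R \succ 0$, $Q = L^\top L \succeq 0$, and $P \succeq 0$), we have $H \succeq 0$ and hence $W \succ 0$ for every $\rho > 0$. Using~\eqref{eq:E11}, i.e.\ $E_{11} = W^{-1} - W^{-1}G^\top(GW^{-1}G^\top)^{-1}GW^{-1}$, I would then write
\[
\frac{1}{\rho}I_q - E_{11} = \left(\frac{1}{\rho}I_q - W^{-1}\right) + W^{-1}G^\top\left(G W^{-1}G^\top\right)^{-1}G W^{-1}.
\]

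First I would argue that both summands are positive semidefinite. The first summand shares its eigenvectors with $H$, and on the eigenspace of $H$ for eigenvalue $\lambda \geq 0$ it has eigenvalue $\frac{1}{\rho} - \frac{1}{\lambda+\rho} = \frac{\lambda}{\rho(\lambda+\rho)} \geq 0$, which vanishes exactly on $\ker H$. The second summand satisfies $v^\top W^{-1}G^\top(GW^{-1}G^\top)^{-1}GW^{-1}v \geq 0$ because $GW^{-1}G^\top \succ 0$ (as $G$ has full row rank by~\eqref{eq:definitionGF} and $W^{-1} \succ 0$); this quadratic form equals zero iff $GW^{-1}v = 0$. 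Consequently $\frac{1}{\rho}I_q - E_{11} \succeq 0$, and any $v$ with $v^\top(\frac{1}{\rho}I_q - E_{11})v = 0$ must satisfy both $v \in \ker H$ and $GW^{-1}v = 0$. Since $Hv = 0$ gives $Wv = \rho v$, hence $W^{-1}v = \frac{1}{\rho}v$, the second condition reduces to $Gv = 0$.

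It therefore remains to show $\ker H \cap \ker G = \{0_q\}$; I expect this structural step to be the crux, even though it is short, since it is the only place where the problem data enters. Writing $v$ in the block form of~\eqref{eq:zDefinition}, membership $v \in \ker H$ forces the input blocks to vanish, $\hat u(k) = 0$ for all $k$, because the corresponding diagonal blocks of $H$ equal $R \succ 0$. Membership $v \in \ker G$ means, reading off the block structure~\eqref{eq:definitionGF} at $x = 0$, that $\hat x(1) = B\hat u(0)$ and $\hat x(k+1) = A\hat x(k) + B\hat u(k)$ for $k \in \{1,\dots,N-1\}$. Inserting $\hat u(k) = 0$ yields $\hat x(1) = 0$ and then, by induction, $\hat x(k) = 0$ for all $k$, so $v = 0_q$. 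This forces $v^\top(\frac{1}{\rho}I_q - E_{11})v > 0$ for every $v \neq 0_q$, which is exactly the positive definiteness of $\frac{1}{\rho}I_q - E_{11}$.
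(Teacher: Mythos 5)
Your proof is correct, and at its core it follows the same route as the paper's: both start from the Schur-complement formula \eqref{eq:E11} for $E_{11}$ and split $\frac{1}{\rho}I_q - E_{11}$ into an ``$H$-part'' plus a ``$G$-part''. In fact your two summands coincide with the paper's after the congruence $X \mapsto (H+\rho I_q)\,X\,(H+\rho I_q)$: the paper first multiplies on both sides by $W = H+\rho I_q$ and writes the result as $\bigl(\frac{1}{\rho}H^2 + H\bigr) + G^\top\bigl(GW^{-1}G^\top\bigr)^{-1}G$. The substantive difference is the last step. The paper declares $\frac{1}{\rho}H^2 + H$ to be positive definite, which is true only if $H \succ 0$; under Assumption~1, $Q = L^\top L$ (and hence the DARE solution $P$, and hence the block-diagonal $H$) may be singular, in which case that summand is only positive semidefinite with kernel $\ker H$, so the paper's proof is incomplete in this degenerate case. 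You instead treat both summands as merely positive semidefinite, identify where each quadratic form vanishes ($\ker H$ for the first; and, using $W^{-1}v = \frac{1}{\rho}v$ on $\ker H$, the condition $Gv = 0$ for the second), and close the argument with the structural fact $\ker H \cap \ker G = \{0_q\}$, which you derive from $R \succ 0$ together with the dynamics constraints encoded in \eqref{eq:definitionGF}. That kernel-intersection step is precisely what is needed for the lemma to hold without implicitly assuming $Q \succ 0$ and $P \succ 0$, so your argument is not only correct but strictly more complete than the paper's own proof.
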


\begin{proof}
Clearly, $H+\rho I_q$ is symmetric and positive definite. Hence, $\frac{1}{\rho} I_q -  E_{11}$ is positive definite if and only if the matrix  
\begin{align*}
&(H +\rho I_q) \left(\frac{1}{\rho}I_q -  E_{11} \right)\! (H+\rho I_q) \\
=\,\, & \frac{1}{\rho} (H+\rho I_q)^2\! -H\!-\rho I_q +   G^\top \! \left( G (H+\rho I_q)^{-1} G^\top \!\right)^{-1}\!G
\end{align*}
is positive definite, where the right-hand side of the equation results from~\eqref{eq:E11}. Now, positive definiteness of the latter matrix can be easily verified since it is the sum of the positive definite matrix
$$
\frac{1}{\rho} (H+\rho I_q)^2 -H-\rho I_q = (H+\rho I_q) \left( \frac{1}{\rho} (H+\rho I_q) - I_q \right) =  \frac{1}{\rho} H^2+H
$$
 and the positive semi-definite matrix
$G^\top  \left( G (H+\rho I_q)^{-1} G^\top \right)^{-1} \!G$.
\end{proof}

Based on Lemma~\eqref{lem:rhoIMinusE11}, it is straightforward to prove observability of $(\Cb_M,\Sb_M)$.

\begin{prop}
\label{prop:observability}
Let $\rho \in \R$ with $\rho>0$, let $M \in \N$ with $M\geq 1$, and let $\Sb_M$ and $\Cb_M$ be as in~\eqref{eq:SM} and \eqref{eq:CM}, respectively. Then, the pair $(\Cb_M,\Sb_M)$ is observable.
\end{prop}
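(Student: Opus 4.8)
The plan is to prove something slightly stronger than observability, namely that the matrix $\Cb_M$ in~\eqref{eq:CM} already has full column rank $r=n+2q$ on its own. This suffices because the unobservable subspace $\bigcap_{k \in \N} \ker(\Cb_M \Sb_M^k)$ is contained in the $k=0$ term $\ker(\Cb_M)$; hence injectivity of $\Cb_M$ forces the unobservable subspace to be trivial and the pair $(\Cb_M,\Sb_M)$ to be observable. It therefore suffices to show that $\Cb_M \xb = 0_{n+(M+1)q}$ implies $\xb = 0_r$ for $\xb=(x^\top,(z^{(0)})^\top,(\mu^{(0)})^\top)^\top$, and in particular the dynamics $\Sb_M$ and the blocks $\Kb^{(2)},\dots,\Kb^{(M)}$ will play no role.

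To carry this out, I would isolate the first three block rows of $\Cb_M$. The rows $C_x$ and $C_z$ extract the $x$- and $z^{(0)}$-components of $\xb$, so the conditions $C_x \xb = 0_n$ and $C_z \xb = 0_q$ give immediately $x = 0_n$ and $z^{(0)} = 0_q$. It remains to recover $\mu^{(0)}$, for which I would use the block row $\Kb^{(1)}$. By~\eqref{eq:K1} this reads
$$
\Kb^{(1)} \xb = E_{12} F\, x + \rho E_{11}\, z^{(0)} + \left( \frac{1}{\rho} I_q - E_{11} \right) \mu^{(0)} = 0_q.
$$
Substituting $x = 0_n$ and $z^{(0)} = 0_q$ collapses this to $\left( \frac{1}{\rho} I_q - E_{11} \right) \mu^{(0)} = 0_q$. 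By Lemma~\ref{lem:rhoIMinusE11}, the matrix $\frac{1}{\rho} I_q - E_{11}$ is positive definite and hence invertible, so $\mu^{(0)} = 0_q$. Combining the three conclusions yields $\xb = 0_r$, which establishes injectivity of $\Cb_M$ and thus observability.

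The argument is essentially immediate once the right block rows are selected, so there is no heavy computation to grind through. The only nontrivial ingredient is the invertibility of $\frac{1}{\rho} I_q - E_{11}$, which is exactly what the preceding lemma supplies. Accordingly, the main ``obstacle'' is conceptual rather than technical: one must recognize that the single block $\Kb^{(1)}$, read together with $C_x$ and $C_z$, already reconstructs the full augmented state $\xb$, so that neither the higher iterates $\Kb^{(j)}$ nor any propagation through $\Sb_M$ is needed.
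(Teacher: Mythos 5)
Your proposal is correct and follows essentially the same route as the paper: both arguments reduce observability to full rank of the $r \times r$ submatrix of $\Cb_M$ formed by stacking $C_x$, $C_z$, and $\Kb^{(1)}$, exploiting its block-triangular structure and invoking Lemma~\ref{lem:rhoIMinusE11} for invertibility of $\frac{1}{\rho} I_q - E_{11}$. Your kernel-based phrasing is just an equivalent restatement of the paper's rank argument.
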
 

\begin{proof}
We prove the claim by showing that the observability matrix has full rank, i.e., rank $r$. To this end, we note that the $r\times r$-matrix
 \begin{equation}
 \label{eq:submatrixCM}
\begin{pmatrix}
C_x \\ C_z \\ \Kb^{(1)} 
\end{pmatrix} = \begin{pmatrix}
I_n &\quad 0_{n\times q} &\quad 0_{n\times q}\\ 
0_{q\times n} &\quad I_q &\quad 0_{q\times q} \\
E_{12} F &\quad \rho E_{11} &\quad \frac{1}{\rho} I_q - E_{11}
\end{pmatrix}
\end{equation}
is a submatrix of $\Cb_M$ for every $M\geq 1$. Hence, a sufficient condition for $(\Cb_M,\Sb_M)$ being observable is \eqref{eq:submatrixCM} having full rank. Now, verifying that \eqref{eq:submatrixCM} has rank $r=n+2q$ is easy  since the diagonal blocks of the block-triangular matrix have rank $n$, $q$, and $q$, where the latter holds according to Lemma~\ref{lem:rhoIMinusE11}.
\end{proof}

Since $(\Cb_M,\Sb_M)$ is observable according to Proposition~\ref{prop:observability}, $\PSet_M^\ast$ as in \eqref{eq:subsetPM} is bounded and finitely determined for every stabilizing choice of $\rho$,  $M$, and $D_z$ (with $M\geq 1$). It is interesting to note that boundedness does not hold for the superset $\PSet_M$  in \eqref{eq:setPM} as shown in the following remark.

\begin{rem}
In contrast to $\PSet_M^\ast$, the set $\PSet_M$ is not bounded  since we have    
\begin{equation}
\label{eq:xInPM}
\xb^\ast:= \begin{pmatrix}
0 \\
z \\
-\rho \left( \frac{1}{\rho} I_q -  E_{11} \right)^{-1}\! E_{11} z
\end{pmatrix} \in \PSet_M
\end{equation}
for every $z \in \R^q$, where the inverse exists according to Lemma~\ref{lem:rhoIMinusE11}. To retrace~\eqref{eq:xInPM}, we note that
$$
\Kb^{(j)} \xb^\ast = \rho^j E_{11}^j z - \rho^{j} E_{11}^{j-1} \left( \frac{1}{\rho} I_q -  E_{11} \right)  \left( \frac{1}{\rho} I_q -  E_{11} \right)^{-1}\! E_{11} z = 0_q
$$
for every $j \in \{1,\dots,M\}$. Hence, $\xb^\ast \in \KSet_M$. Moreover, we clearly have $\Ab \xb^\ast = 0_r$ and 
$$
\Bb \begin{pmatrix}
\Kb^{(M)}  \\
0_{q \times r} 
\end{pmatrix}\xb^\ast =  \Bb \cdot 0_{2q} = 0_r.
$$
As a consequence, we find $\Sb_M \xb^\ast = 0_r$ by definition of $\Sb_M$ in~\eqref{eq:SM}. In combination, we obtain $\Sb_M^k \xb^\ast \in \KSet_M$ for every $k\in \N$ and thus $\xb^\ast \in \PSet_M$.
\end{rem}

We finally note that the characterization of positively invariant sets can be simplified in exchange for slightly conservative results. In fact, regarding~\eqref{eq:SM}, it is easy to see that 
$$
 \mu^{(0)}(k) = C_{\mu} \Sb_M^{k-k^\ast} \xb(k^\ast) = 0_q \qquad \text{with} \qquad C_{\mu}:=\begin{pmatrix}
0_{q \times (n+q)} &\quad I_q 
\end{pmatrix} 
$$
for every $k > k^\ast$, every $M \geq 1$, and every $\xb(k^\ast) \in \R^{r}$. Hence, during the investigation of positive invariance,  we could assume $\mu=0$ and restrict our attention to the $x$--$z$--subspace.
However, for brevity and in order to incorporate the case $ C_{\mu}  \xb(k^\ast) \neq 0_q$, we do not detail this modification.

\section{Cost-to-go around the augmented origin}
\label{sec:costToGo}

Let us assume that a trajectory of the augmented system~\eqref{eq:augmentedSys} enters $\PSet_M^\ast$ at time step $k^\ast \in \N$, i.e., $\xb(k^\ast) \in \PSet_M^\ast$. Positive invariance of $\PSet_M^\ast$ combined with the linear dynamics~\eqref{eq:linearDynamics} then imply 
$$
\xb(k) = \Sb_M^{k-k^\ast} \xb(k^\ast) \in \PSet_M^\ast
$$
for all $k \geq k^\ast$. For stabilizing parameters $\rho$,  $M$, and $D_z$ (that imply Schur stability of $\Sb_M$), we additionally find
$$
\lim_{k \rightarrow \infty} \Sb_M^{k-k^\ast} \xb(k^\ast) = 0_r,
$$
and hence convergence to the origin. Thereby, the evolution of the original states obeys~\eqref{eq:statesLinear} 
and the applied inputs are~\eqref{eq:inputsLinear}
for $k \geq k^\ast$. 
According to the following proposition, the corresponding infinite-horizon cost is given by~\eqref{eq:costToGo}. 

\begin{prop}
\label{prop:costToGo}
Let the parameters $\rho>0$, $D_z$, and $M$ be such that $\Sb_M$ is Schur stable, let
\begin{equation}
\label{eq:QQ}
\Qb:=C_x^\top Q C_x + (\Kb^{(M)})^\top C_u^\top R C_u \Kb^{(M)}, 
\end{equation}
and assume that $\xb(k^\ast) \in \PSet_M^\ast$ for some $k^\ast \in \N$.
Then,
\begin{equation}
\label{eq:costToGo}
\sum_{k=k^\ast}^{\infty} \ell(x(k),u(k)) = \xb^\top\!(k^\ast) \Pb \xb(k^\ast),
\end{equation}
where $\Pb$ is the solution of the Lyapunov equation
\begin{equation}
\label{eq:dlyap}
\Pb = \Qb + \Sb_M^\top \Pb \Sb_M.
\end{equation}
\end{prop}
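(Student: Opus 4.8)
The plan is to reduce the infinite-horizon cost to a single quadratic form in $\xb(k^\ast)$ and then identify the associated weighting matrix with the solution of the Lyapunov equation~\eqref{eq:dlyap}. First I would rewrite the stage cost along the linear trajectory as a quadratic form in the augmented state. Since $\xb(k^\ast) \in \PSet_M^\ast$ forces the linear dynamics~\eqref{eq:linearDynamics} to apply for all $k \geq k^\ast$, the original state obeys $x(k) = C_x \xb(k)$ by~\eqref{eq:statesLinear} and the input obeys $u(k) = C_u \Kb^{(M)} \xb(k)$ by~\eqref{eq:inputsLinear}. Substituting these into $\ell(x,u) = x^\top Q x + u^\top R u$ gives
\begin{equation*}
\ell(x(k),u(k)) = \xb(k)^\top \left( C_x^\top Q C_x + (\Kb^{(M)})^\top C_u^\top R C_u \Kb^{(M)} \right) \xb(k) = \xb(k)^\top \Qb \xb(k),
\end{equation*}
so that $\Qb$ as defined in~\eqref{eq:QQ} is exactly the stage-cost matrix in the augmented coordinates.

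Second, I would substitute $\xb(k) = \Sb_M^{k-k^\ast} \xb(k^\ast)$ and reindex the sum with $j := k - k^\ast$, pulling the (constant) vector $\xb(k^\ast)$ out of the summation, which yields
\begin{equation*}
\sum_{k=k^\ast}^\infty \ell(x(k),u(k)) = \xb(k^\ast)^\top \left( \sum_{j=0}^\infty (\Sb_M^\top)^j \, \Qb \, \Sb_M^j \right) \xb(k^\ast).
\end{equation*}
It thus remains to show that the matrix series converges and that its limit $\Pb$ solves~\eqref{eq:dlyap}. Convergence follows directly from Schur stability of $\Sb_M$: the spectral radius being strictly less than one makes $\|\Sb_M^j\|$ decay geometrically, so the series is absolutely convergent and $\Pb$ is well-defined.

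Finally, I would verify the Lyapunov equation by a simple index shift. Inserting $\Pb = \sum_{j=0}^\infty (\Sb_M^\top)^j \Qb \Sb_M^j$ into the right-hand side of~\eqref{eq:dlyap} gives $\Qb + \Sb_M^\top \Pb \Sb_M = \Qb + \sum_{j=0}^\infty (\Sb_M^\top)^{j+1} \Qb \Sb_M^{j+1} = \sum_{j=0}^\infty (\Sb_M^\top)^{j} \Qb \Sb_M^{j} = \Pb$, so the series limit indeed satisfies~\eqref{eq:dlyap}. Because $\Sb_M$ is Schur stable, this discrete-time Lyapunov equation has a unique solution, so the $\Pb$ recovered from the series coincides with the $\Pb$ referenced in the statement, completing the argument.

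I do not expect a genuine obstacle here, as this is essentially the standard cost-to-go computation for a stable linear system; the only point requiring care is the absolute convergence of the matrix series, which is secured by the standing Schur-stability hypothesis and is precisely what legitimizes both the reindexing in the second step and the termwise index shift in the Lyapunov verification.
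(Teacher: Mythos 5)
Your proof is correct and follows essentially the same route as the paper: express the stage cost as $\xb(k)^\top \Qb\, \xb(k)$ via \eqref{eq:inputsLinear}--\eqref{eq:statesLinear}, reindex the sum using the linear dynamics $\xb(k)=\Sb_M^{k-k^\ast}\xb(k^\ast)$, and identify the convergent series $\sum_{j=0}^\infty (\Sb_M^\top)^j \Qb\, \Sb_M^j$ with the solution of \eqref{eq:dlyap}. The only difference is that you spell out the ``standard arguments'' the paper leaves implicit (the termwise index shift verifying \eqref{eq:dlyap} and the uniqueness of its solution under Schur stability), which makes your write-up slightly more self-contained.
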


\begin{proof}
The definition of the stage cost in \eqref{eq:stageCost} combined with the input and state sequences~\eqref{eq:inputsLinear}--\eqref{eq:statesLinear} immediately lead to 
\begin{align*}
&\quad\sum_{k=k^\ast}^{\infty} \ell(x(k),u(k)) \\
&= \xb^\top\!(k^\ast) \!\left(\sum_{k=k^\ast}^{\infty} (\Sb_M^{k-k^\ast})^\top \!\left(C_x^\top\! Q \,C_x + (C_u \Kb^{(M)})^{\!\top} \! R C_u \Kb^{(M)}\right) \Sb_M^{k-k^\ast}  \right)\! \xb(k^\ast),\\
&= \xb^\top\!(k^\ast) \!\left(\sum_{\Delta k=0}^{\infty} (\Sb_M^{\Delta k})^{\top} \Qb \,\Sb_M^{\Delta k}  \!\right)\! \xb(k^\ast),
\end{align*}
where the second equation results from~\eqref{eq:QQ} and the substitution $\Delta k:= k - k^\ast$. It remains to prove that
\begin{equation}
\label{eq:sumPP}
\sum_{\Delta k=0}^{\infty} (\Sb_M^{\Delta k})^\top \Qb \, \Sb_M^{\Delta k} = \Pb.
\end{equation}
At this point, we first note that the sum in~\eqref{eq:sumPP} converges since $\Sb_M$ is Schur stable. Moreover, $\Qb$ as in~\eqref{eq:QQ} is obviously positive semi-definte. Using standard arguments, it is then straightforward to show that $\Pb$ can be inferred from~\eqref{eq:dlyap}.
\end{proof}

\begin{rem}
The matrix $\Qb$ in \eqref{eq:dlyap} is usually required to be positive definite in order to guarantee a unique and positive definite solution $\Pb$. The Lyapunov equation can, however, also be solved for positive semi-definite $\Qb$ yielding a positive semi-definite matrix $\Pb$. A suitable algorithm can, e.g., be found in \cite{Hammarling1991}.
\end{rem}

\section{Design parameters of the real-time ADMM}
\label{sec:designParameters}

The previous sections provide some insights on the closed-loop dynamics of ADMM-based MPC.
However, the identified model~\eqref{eq:augmentedSys} and invariant set~\eqref{eq:subsetPM} depend on various parameters that need to be specified.
Analogously to the analysis of $\Sb_M$ in Section~\ref{sec:LinearAroundOrigin}, we can distinguish three categories of involved parameters: The system parameters, the MPC parameters, and the ADMM parameters. Here, we focus on suitable choices for the ADMM parameters $\rho$, $M$, $D_z$, $D_\mu$, $z^{(0)}(0)$, and $\mu^{(0)}(0)$ that correspond to the weighting factor in \eqref{eq:copyObjective}, the number of iterations~\eqref{eq:iterationsFinal}, the update matrices in~\eqref{eq:zMuUpdates}, and the ``free'' initial values in~\eqref{eq:augmentedSys}, respectively. We stress, in this context, that the initial state $x(0)$ of the original system~\eqref{eq:linearSys} is, of course, not freely selectable.
Now, it is well-known that the performance of ADMM significantly varies with $\rho$. Optimal choices for $\rho$ are available for some specific setups (see, e.g., \cite{Ghadimi2015,Giselsson2017}). Unfortunately, these setups do not match with the ADMM scheme considered here. Hence, in our numerical experiments in the next section, we choose $\rho \in \{1,10,100\}$ in order to cover different magnitudes.
Regarding the number of ADMM iterations, we consider $M \in \{1,5,10\}$ in the numerical benchmark.  For $M=1$, we deliberately reproduce the results from \cite{SchulzeDarup2019_ECC_ADMM} that focus on a single ADMM iteration per time step. 

In order to make reasonable choices for $D_z$ and $D_\mu$, we have to better understand the role of these update matrices. To this end, let us first ignore the system dynamics~\eqref{eq:linearSys} and assume $x(k+1)=x(k)$.
In this case, we can choose $D_z$ and $D_\mu$ such that the real-time iterations become classical ADMM iterations solving~\eqref{eq:QP} for a fixed state $x$.  
In fact, for 
\begin{equation}
\label{eq:DzDmuCopy}
D_z=D_\mu=I_q,
\end{equation}
 we find $z^{(0)}(k+1)=z^{(M)}(k)$ and $\mu^{(0)}(k+1)=\mu^{(M)}(k)$, which reflects the classical setup in the sense that $z^{(j)}(k+1)=z^{(M+j)}(k)$ and $\mu^{(j)}(k+1)=\mu^{(M+j)}(k)$. However, in reality, \eqref{eq:linearSys} usually implies $x(k+1) \neq x(k)$. Hence, the choice \eqref{eq:DzDmuCopy} might, in general, not be useful.
To identify suitable alternatives, it is helpful to recall the definition of $z$. As apparent from~\eqref{eq:zDefinition}, $z$ contains predicted states and inputs for $N$ steps. Intuitively, after applying the first predicted input according to~\eqref{eq:uADMM},  it is reasonable to reuse the remaining $N-1$ steps as initial guesses for $z^{(0)}(k+1)$.
This classical idea is omnipresent in MPC and also exploited for the real-time scheme in \cite[Sect.~2.2]{Diehl2005_Single}. Applying the shifting requires to extend the shortened predictions by one terminal step. In this context, a simple choice is $\hat{u}(N-1)=0_m$ and consequently $\hat{x}(N)=A\hat{x}(N-1)$. Clearly, this choice leads to
\begin{equation}
\label{eq:CzShiftZero}
D_z = \begin{pmatrix}
0_{l\times (n+m)} & I_{l} & 0_{l\times n} \\
0_{n\times (n+m)} & 0_{n\times l} &  I_n \\
0_{m \times (n+m)} & 0_{m \times l} & 0_{m \times n}\\
0_{n \times (n+m)}  & 0_{n \times l} & A
\end{pmatrix},
\end{equation}
where $l:=q-2n-m$ is introduced for brevity. Another popular choice is $\hat{u}(N-1)=K \hat{x}(N-1)$, which implies $\hat{x}(N)=(A+BK)\,\hat{x}(N-1)$ and 
\begin{equation}
\label{eq:CzShiftLQR}
D_z = \begin{pmatrix}
0_{l\times (n+m)} & I_{l} & 0_{l\times n} \\
0_{n\times (n+m)} & 0_{n\times l} &  I_n \\
0_{m \times (n+m)} & 0_{m \times l} & K\\
0_{n \times (n+m)}  & 0_{n \times l} & S
\end{pmatrix},
\end{equation}
where $S:=A+BK$.
It remains to apply the shifting to the update of the Lagrange multipliers $\mu$. 
In contrast to the predictions for $z$, it is hard to reasonably extend the shortened predictions for $\mu$. Hence, we choose 
\begin{equation}
\label{eq:CmuShift}
D_{\mu}=\begin{pmatrix}
0_{(q-n-m) \times (n +m)} & I_{q-n-m} \\
0_{(n+m) \times (n +m)} & 0_{(n+m) \times (q-n-m)}
\end{pmatrix}
\end{equation}
as a counterpart for \eqref{eq:CzShiftZero} as well as \eqref{eq:CzShiftLQR}.
For both cases, this choice can be interpreted as the assumption that the added terminal step in $z$ is optimal.

The previously discussed choices for $\rho$, $M$, $D_z$, and $D_\mu$ determine the dynamics of the augmented system~\eqref{eq:augmentedSys}. 
The closed-loop trajectory additionally depends on on the initial state
$$
\xb_0 = \xb(0) = \begin{pmatrix}
x(0) \\
z^{(0)}(0) \\
\mu^{(0)}(0)
\end{pmatrix}= \begin{pmatrix}
x_0 \\
z_0 \\
\mu_0
\end{pmatrix}\!.
$$
As mentioned above, $z_0$ and $\mu_0$ can be freely chosen. We propose three different initial choices for $z_0$ that are, to some extent, related to the three discussed choices for $D_z$.  
A naive choice for $z_0$ is $0_p$. In fact, this choice is only optimal if $x_0=0_n$, i.e., if the system is initialized at the setpoint.
A more reasonable initialization results for the predicted inputs $\hat{u}(0)=\dots=\hat{u}(N-1)=0_m$ and the related states $\hat{x}(k)=A^k x_0$. For this choice, $z_0$ can be written as $z_0 := D_0 x_0$ with
\begin{equation}
\label{eq:C0zero}
D_0^\top := \begin{pmatrix}\begin{pmatrix}
0_{m \times n} \\
A
\end{pmatrix}^\top & \dots & \begin{pmatrix}
0_{m \times n} \\
A^N
\end{pmatrix}^\top 
\end{pmatrix}.
\end{equation}
We note that this initial choice satisfies the input constraints by construction. However, the state constraints might be violated especially for unstable system matrices $A$.
In this case, the choice
\vspace{-3mm}
 \begin{equation}
\label{eq:C0LQR}
D_0^\top := \begin{pmatrix}\begin{pmatrix}
K S^0 \\
S
\end{pmatrix}^\top & \dots & \begin{pmatrix}
K S^{N-1} \\
S^N
\end{pmatrix}^\top 
\end{pmatrix}
\end{equation}
might be useful, which is based on the input predictions $\hat{u}(k)=K \hat{x}(k)$ and the related states $\hat{x}(k)=S^k x_0$. It is hard to construct initializations for the Lagrange multipliers that reflect the different approaches for $z_0$. We thus choose $\mu_0=0_p$ independent of the initialization for~$z_0$.

\begin{table}[htp]
\caption{Overview of different choices for the weighting factor $\rho$, the number of iterations $M$, the update matrices $D_z$ and $D_\mu$, and the initializations $z_0 = D_0 x_0$ (from left to right).}
\label{tab:rhoUpdatesInit}
\begin{center}
\small
\begin{tabular}{lc}
\toprule
 weighting\!\! & $\rho$ \\
\midrule
 small  & $1$ \\
 medium & $10$ \\
 large & $100$ \\
\bottomrule 
\end{tabular}\hspace{5mm}
\begin{tabular}{lc}
\toprule
 iterations & $M$ \\
\midrule
one  & $1$ \\
 five & $5$ \\
 ten & $10$ \\
\bottomrule  
\end{tabular}\hspace{5mm}
\begin{tabular}{lcc}
\toprule
 updates & $D_z$ & $D_\mu$ \\
\midrule
copy  & $I_q$ & $I_q$ \\
shift-zero & \eqref{eq:CzShiftZero} & \eqref{eq:CmuShift} \\
 shift-LQR & \eqref{eq:CzShiftLQR} & \eqref{eq:CmuShift}\\
\bottomrule 
\end{tabular}\hspace{5mm}
\begin{tabular}{lc}
\toprule
 initialization\!\! & $D_0$ \\
\midrule
 naive  & $0_{q \times n}$ \\
 zero & \eqref{eq:C0zero}  \\
 LQR & \eqref{eq:C0LQR} \\
\bottomrule 
\end{tabular}
\end{center}
\end{table}

A summary of the different choices for the parameter $\rho$, the number of iterations $M$, the update matrices $D_z$ and $D_\mu$, and the initialization $z_0$ is listed in Table~\ref{tab:rhoUpdatesInit}. Hence, by considering all combinations of the parameter choices, we obtain $81 =  3^4$
different realizations of the proposed real-time ADMM.

\section{Numerical benchmark}
\label{sec:benchmark}

To investigate the performance of the ADMM-based MPC, we apply the $81$ 
ADMM parametrizations from the previous section to the (discretized) double integrator 
with the system matrices
$$
A=\begin{pmatrix}1&1\\0&1\end{pmatrix} \quad \text{and} \quad  B=\begin{pmatrix}0.5\\1 \end{pmatrix}
$$
and the state and input bounds
$$
\ol{x}=-\ul{x}=\begin{pmatrix}
25 \\
5
\end{pmatrix} \quad \text{and} \quad  \ol{u}=-\ul{u}=1.
$$
The MPC cost functions in \eqref{eq:stageCost} are specified by $Q=I_2$, $R=0.1$, and $P$ as in~\eqref{eq:DARE}. The prediction horizon is chosen as $N=5$.

For every parametrization, we first investigate the linear dynamics~\eqref{eq:linearDynamics} around the augmented origin. More precisely,  we study the Schur stability of the augmented system matrix $\Sb_M$.
As pointed out in Section~\ref{sec:LinearAroundOrigin}, only the parameters $\rho$, $M$, and $D_z$ have an effect on $\Sb_M$. For the corresponding $3^3 = 27$ 
parametrizations, it is easy to verify numerically that $\Sb_M$ as in~\eqref{eq:SM} is always Schur stable.
This observation is promising since Schur stability has, so far, not been proven for specific parameter choices. In fact, Proposition~\ref{prop:stableParameters} merely proves the existence of stabilizing parameters and the proof focuses on small weighting factors $\rho$.
Moreover, one can easily verify that the (meaningless but possible) choice $\rho=10$, $M=1$, $D_z=-2 I_q$ results in an unstable matrix $\Sb_M$.
Now, since the pair $(\Cb_M,\Sb_M)$ (with $\Cb_M$ as in \eqref{eq:CM}) is always observable according to Proposition~\ref{prop:observability}, the set $\PSet_M^\ast$ (as in \eqref{eq:subsetPM}) is finitetely determined for all considered parametrizations.
We next compute $\PSet_M^\ast$ for every parametrization using the standard procedure in \cite[Sect.~III]{Gilbert1991}.
 The resulting sets $\PSet_M^\ast$ are high-dimensional. In fact, $\PSet_M^\ast$ is $r$-dimensional with 
 $$
 r=n+2q= n+2(n+m) N=62
 $$ 
 for $n=2$, $m=1$, and $N=10$ as in the example.
In order to get a feeling for the size of the various sets $\PSet_M^\ast$, we will analyze low-dimensional slices of the form
\begin{equation}
\label{eq:sliceS}
\SSet_{z,\mu} := \left\{ x \in \R^n\,\left|\, \xb=\begin{pmatrix}
x^\top & z^\top & \mu^\top
\end{pmatrix}^\top \in \PSet_M^\ast \right.\right\}
\end{equation}
that result from fixing $z$ and $\mu$ to some specific values.
It is easy to see that the slices $\SSet_{z,\mu}$ are subsets of the state constraints $\XSet$ for every choice of $z$ and $\mu$.
Moreover, the slices $\SSet_{z,\mu}$ have some similarities with the set  
\begin{equation}
\label{eq:setT}
\TSet:=\{ x \in \R^n \,|\, S^k x \in \DSet , \, \forall k \in \N \},  
\end{equation}
where $\DSet :=\{ x \in \XSet \,|\, Kx \in \USet \}$, where $S=A+BK$ as above, and where $K$ is as in~\eqref{eq:KLQR}. Clearly, the set $\TSet$ refers to the largest set, where the LQR can be applied without violating the constraints and where \eqref{eq:KLQR} applies (for $P$ as in \eqref{eq:DARE}). It can be easily computed (see Fig.~\ref{fig:SetsTrajsXs}) and typically serves as a terminal set for MPC. The similarities between $\SSet_{z,\mu}$ and $\TSet$ are as follows: For every state $x$ in these sets, the upcoming trajectories 
(resulting from ADMM-based MPC respectively classical MPC) are captured by linear (augmented) dynamics and converging to the (augmented) origin.
Hence, both sets provide a numerically relatively cheap underestimation of the domain of attraction (DoA) for the corresponding predictive control scheme. We compare the size of these underestimations by evaluating the ratio
\begin{equation}
\label{eq:ratioST}
\frac{\vol(\SSet_{z,\mu})}{\vol(\TSet)}
\end{equation}
for different $z$ and $\mu$. More precisely, we consider $z=D_0 x $ for the three variants of $D_0$ in Table~\ref{tab:rhoUpdatesInit} and $\mu=0_q$, i.e., we consider slices related to the different initializations for $z_0$ and $\mu_0$.
Numerical values for the ratios~\eqref{eq:ratioST} and all $81$
 parametrizations are listed in Table~\ref{tab:Benchmark} (see columns ``vol.'').
Interestingly, all values are larger than or equal to $1$. Hence, the DoA of the system controlled by the ADMM-based MPC is at least as large as the set $\TSet$. Moreover, for some parametrizations, $\SSet_{z,\mu}$ is significantly larger than $\TSet$. For example, the ratio~\eqref{eq:ratioST} evaluates to $31.00$ 
for the parametrization in line $13$ of Table~\ref{tab:Benchmark} and $M=5$
(see Fig.~\ref{fig:SetsTrajsXs}).  Another interesting observation is that the ratios are decreasing with $M$  for most parametrizations except for those in lines $4$, $5$, and $13$ of Table~\ref{tab:Benchmark}. One explanation for this trend is the fact that the number of rows in $\Cb_M$ (see \eqref{eq:CM}) and, consequently, the number of potential hyperplanes restricting $\PSet_M^\ast$ is increasing with $M$.

The slices $\SSet_{z,\mu}$ provide a useful underestimation of the DoA. However, in order to get a more complete impression of the DoA, we have to take into account initial states $x_0$ outside of $\SSet_{z,\mu}$. To this end, we randomly generated $500$ initial states $x_0$ that are feasible for the classical MPC, i.e., that are contained in 
$\FSet_{5}$ defined as in \eqref{eq:setFN}  (see Fig.~\ref{fig:SetsTrajsXs}). Although not enforced by a terminal set, all of these states are steered to the origin by the classical MPC. In fact, the corresponding trajectories enter the set $\TSet$ after at most $15$ time steps.
In contrast, not all trajectories resulting from the proposed ADMM-based MPC are converging.
The percentages of converging trajectories for the different parametrizations are listed in Table~\ref{tab:Benchmark} (see columns ``cnvg.''). In this context, a trajectory is counted as ``converging'' if it reaches the set $\PSet_M^\ast$ after at most $50$ time steps. 
Apparently, the numbers of converging trajectories differ significantly for the various parametrizations (see also Fig.~\ref{fig:SetsTrajsXs}). 
In fact, only $13\%$ of the trajectories converge for the parametrization in line $21$ in Table~\ref{tab:Benchmark} and $M=1$, whereas $100\%$ converge, e.g, for the parametrization in line $25$ and $M=10$.
We further observe that the convergence ratios are greater than $87\%$ whenever shifted updates are used (i.e., $D_z$ as in \eqref{eq:CzShiftZero} or \eqref{eq:CzShiftLQR}  and $D_\mu$ as in \eqref{eq:CmuShift}). In contrast, for copied updates (i.e., $D_z=D_\mu=I_q$), the majority of convergence rates is below $67\%$. A simple explanation for this observation are varying states $x(k)$.

\begin{figure}[tp]
\includegraphics[trim=1.2cm 10.2cm 1.2cm 10.2cm, clip=true,width=0.98\linewidth]{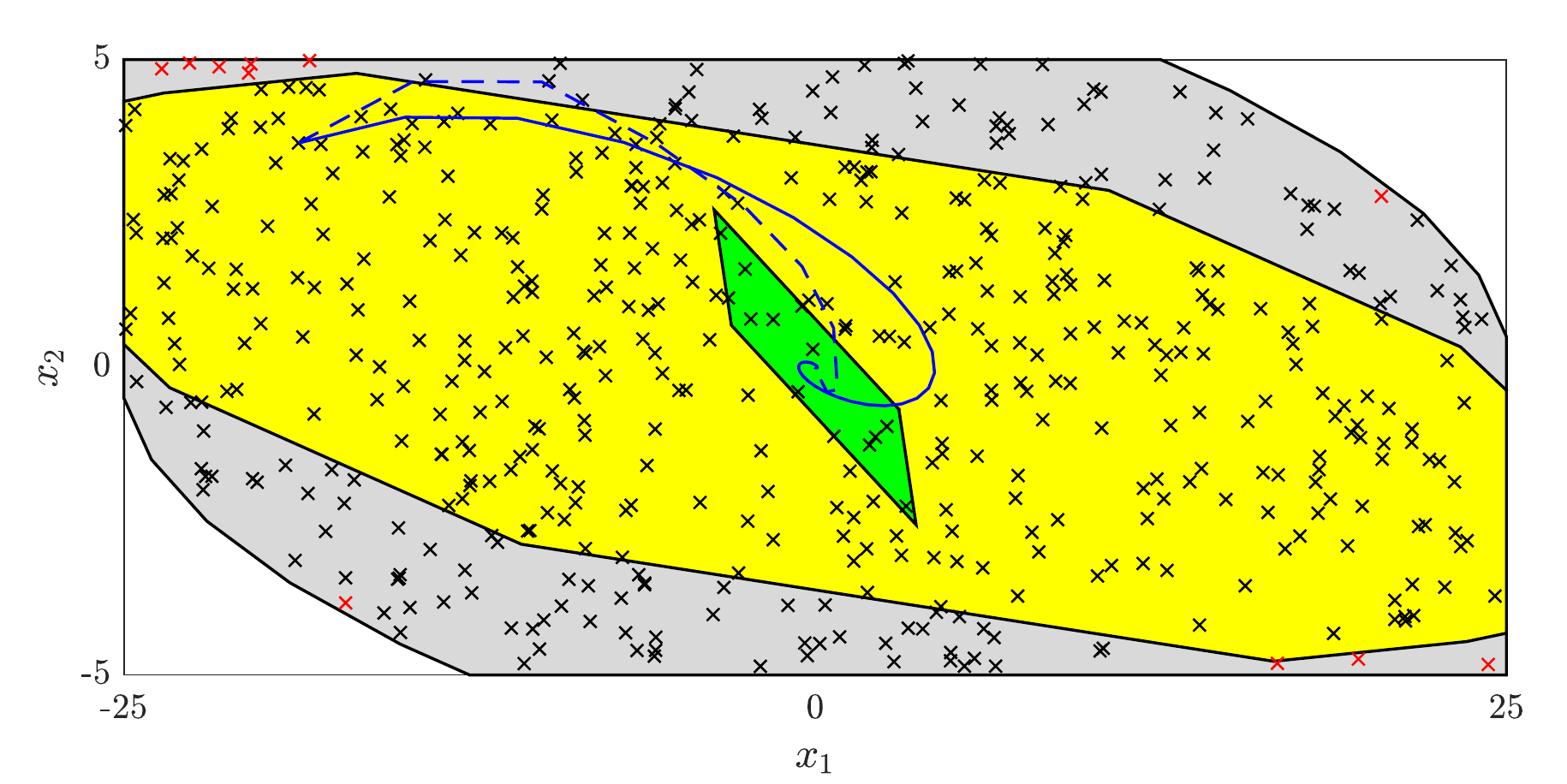}
\includegraphics[trim=1.2cm 10.2cm 1.2cm 10.2cm, clip=true,width=0.98\linewidth]{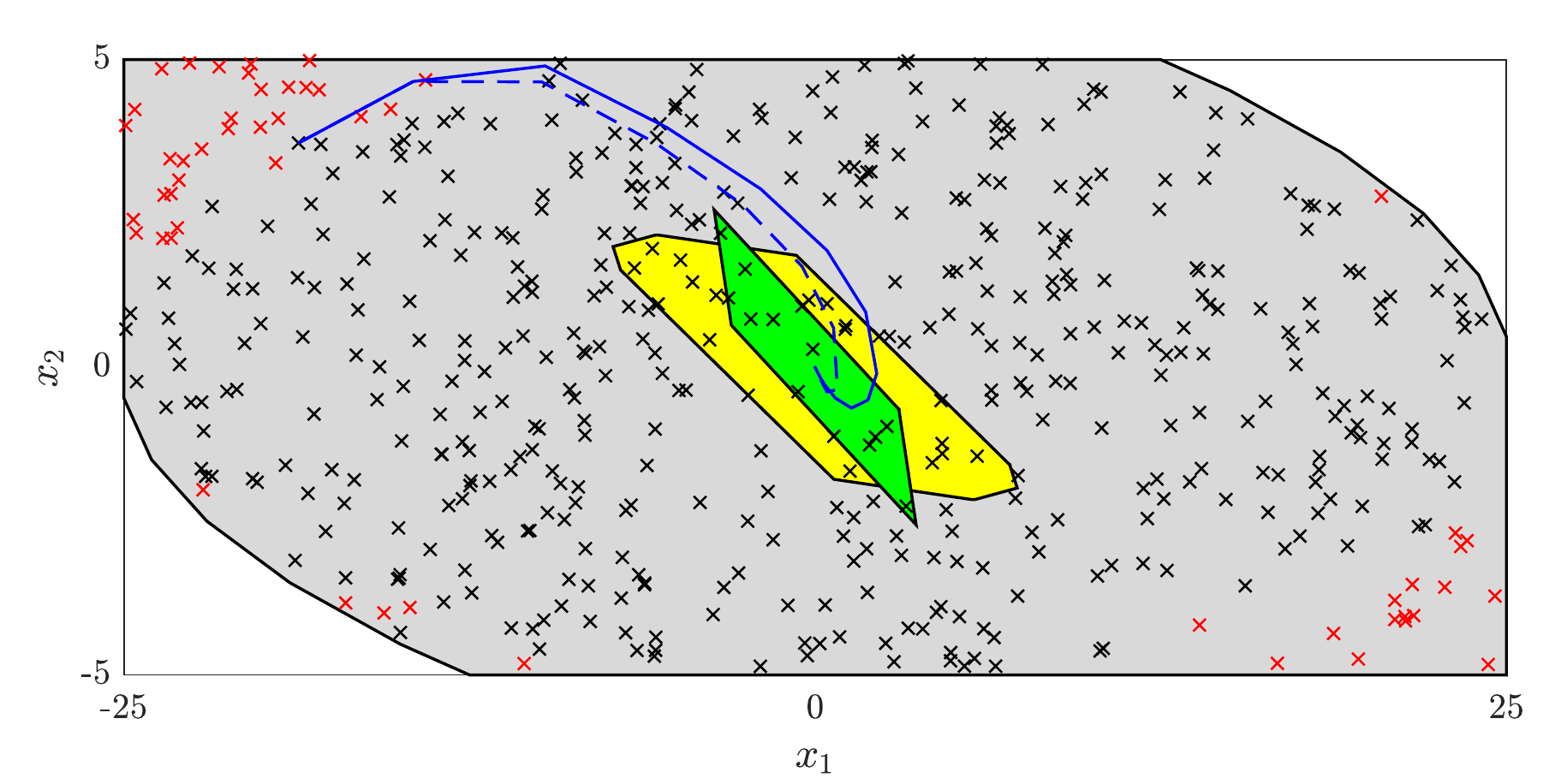}
\caption{Illustration of various sets, initial states, and trajectories resulting from the numerical example for $M=5$ and the parametrizations in lines 13 (top) and 23 (bottom) of Table~\ref{tab:Benchmark}, respectively. In both plots, the sets $\FSet_5$ (gray), $\SSet_{z_0,\mu_0}$ (yellow), and $\TSet$ (green) are shown. 
Moreover, the 500 generated initial states $x_0$ are marked with crosses. In this context, black, respectively red, crosses indicate whether the corresponding ADMM-based trajectories converge or not. Finally, the trajectories resulting from the ADMM-based MPC (solid blue) and the classical MPC (dashed blue) are depicted for the initial state $x_0^\top = (-18.680 \,\,\, 3.646)$.}
\label{fig:SetsTrajsXs}
\end{figure}

\newpage
It remains to investigate the performance of the ADMM-based MPC. To this end, we compare the overall costs of converging trajectories to the corresponding costs resulting from classical MPC. By definition, a converging trajectory enters $\PSet_M^\ast$ after at most $50$ time steps. Hence, according to Proposition~\ref{prop:costToGo}, the overall cost for a converging trajectory evaluates to
\begin{equation}
\label{eq:infCostADMM}
V_\infty^{\text{ADMM}}(x_0):= \sum_{k=0}^{\infty} \ell(x(k),u(k)) = \xb^\top\!(k^\ast) \Pb \xb(k^\ast) + \sum_{k=0}^{k^\ast-1} \ell(x(k),u(k)), 
\end{equation}
where $k^\ast\leq 50$ is such that $\xb(k^\ast) \in \PSet_M^\ast$. Similarly, the overall cost of a classical MPC trajectory can be calculated as
\begin{equation}
\label{eq:infCostMPC}
V_\infty^{\text{MPC}}(x_0):= \sum_{k=0}^{\infty} \ell(x(k),u(k))= \varphi(x(k_\infty)) + \sum_{k=0}^{k_\infty-1} \ell(x(k),u(k)), 
\end{equation}
where $k_\infty$ is such that $x(k_\infty) \in \TSet$.
We separately compute the performance ratio
$$
\frac{V_\infty^{\text{MPC}}(x_0)}{V_\infty^{\text{ADMM}}(x_0)}
$$
 for every converging trajectory and list the mean values for the different parametrizations in Table~\ref{tab:Benchmark} (see columns ``perf.'').
The (average) performance ratios indicate that the ADMM-based MPC results, in most cases, in a performance decrease. Nevertheless, the performance ratios are larger than $74\%$ for all cases with $\rho \in \{1,10\}$.
Furthermore, the performance ratios are increasing with $M$ for most parametrizations.
This observation is reasonable since the ADMM iterations~\eqref{eq:iterationsFinal} monotonically converge to the optimizers $z^\ast$ and $\mu^\ast$ in the sense that the  quantity
\begin{equation}
\label{eq:decreaseADMM}
\rho \| z^{(j)} - z^\ast \|_2^2 + \frac{1}{\rho} \| \mu^{(j)} - \mu^\ast \|_2^2
\end{equation}
 decreases with $j$ (see, e.g., \cite[App.~A]{Boyd2011}).
Nevertheless, being closer to the optimum does not necessarily imply constraint satisfaction. This is apparent from the convergence ratios in Table~\ref{tab:Benchmark} that are decreasing with $M$ in some cases.

In order to compare the proposed real-time iteration schemes with standard ADMM, we finally investigate the number of
iterations \eqref{eq:iterationsFinal} necessary to solve 
the arising QPs to a certain level of accuracy. More precisely, we 
evaluate the first iteration $j$ that satisfies 
\begin{equation}
\label{eq:zjAccurate}
\| z^{(j)} - z^\ast \|_2^2\leq 10^{-4}
\end{equation}
 and denote it with $M^\ast$ for each QP. Regarding the parametrization, we consider again the 27 variants in Table~\ref{tab:Benchmark} and consequently warmstarts similar to~\eqref{eq:zMuUpdates}. Since $z^{(M^\ast)} \approx z^\ast$, the warmstarts are only meaningful if we follow the trajectories resulting from the original MPC (and not those resulting from the real-time schemes). Hence, the listed $M^\ast$ in Table~\ref{tab:Benchmark} refer to the number of iterations required to obtain~\eqref{eq:zjAccurate} averaged over all QPs along all $500$ MPC trajectories for each parametrization. Apparently, $M^\ast$ is, on average, significantly larger than the considered number of iterations $M$ for the investigated real-time schemes. In fact, we observe $M^\ast \in [30.5,431.9]$ while $M \in [1,10]$ has been considered for the proposed schemes. 

\begin{landscape}
\begin{table*}[htp]
\centering
\caption{Numerical benchmark for the $81$ 
real-time ADMM parametrizations applied to the double integrator example. The abbreviations ``vol.'', ``cnvg.'', and ``perf.'' are short for volume, convergence, and performance ratio, respectively. The listed iterations $M^\ast$ are required for standard ADMM to achieve the accuracy~\eqref{eq:zjAccurate}.}
\label{tab:Benchmark}
\small
\begin{tabular}{ccccrrrrrrrrrr}
\toprule
\multicolumn{4}{c}{ADMM parameters} & \multicolumn{3}{c}{$M=1$}   & \multicolumn{3}{c}{$M=5$}  & \multicolumn{3}{c}{$M=10$} & $M^\ast$\, \\
\cmidrule(lr){1-4}\cmidrule(lr){5-7}\cmidrule(lr){8-10}\cmidrule(lr){11-13}
line & updates  & init. & $\rho$  & \multicolumn{1}{c}{\,\,\,vol.}&\multicolumn{1}{c}{\,\,cnvg.}&\multicolumn{1}{c}{\,\,perf.}&\multicolumn{1}{c}{\,\,\,vol.}&\multicolumn{1}{c}{\,\,cnvg.}&\multicolumn{1}{c}{\,\,perf.}&\multicolumn{1}{c}{\,\,\,vol.}&\multicolumn{1}{c}{\,\,cnvg.}&\multicolumn{1}{c}{\,\,perf.} &  \\
\midrule
1 &shift-LQR &LQR &$100$& $1.00$& $0.94$& $0.94$& $1.00$& $0.98$& $0.98$& $1.00$& $0.97$& $0.97$& $61.7$ \\
2 &shift-LQR &LQR &$10$& $1.00$& $0.94$& $0.93$& $1.00$& $0.93$& $1.00$& $1.00$& $0.97$& $1.00$& $30.5$ \\
3 &shift-LQR &LQR &$1$& $1.00$& $0.94$& $0.91$& $1.00$& $0.91$& $0.99$& $1.00$& $0.89$& $1.00$& $277.6$ \\
4 &shift-LQR &zero &$100$& $1.05$& $0.95$& $0.50$& $1.32$& $0.98$& $0.69$& $1.78$& $0.97$& $0.84$& $77.0$ \\
5 &shift-LQR &zero &$10$& $1.69$& $0.95$& $0.81$& $2.94$& $0.91$& $0.98$& $1.70$& $0.97$& $1.00$& $32.3$ \\
6 &shift-LQR &zero &$1$& $2.17$& $0.92$& $0.94$& $1.04$& $0.91$& $0.99$& $1.00$& $0.89$& $1.00$& $277.8$ \\
7 &shift-LQR &naive &$100$& $2.00$& $0.94$& $0.97$& $1.93$& $0.98$& $0.99$& $1.85$& $0.96$& $0.98$& $64.8$ \\
8 &shift-LQR &naive &$10$& $1.86$& $0.94$& $0.96$& $1.48$& $0.94$& $1.00$& $1.26$& $0.97$& $1.00$& $30.8$ \\
9 &shift-LQR &naive &$1$& $1.36$& $0.94$& $0.93$& $1.02$& $0.91$& $0.99$& $1.00$& $0.89$& $1.00$& $277.6$ \\
10 &shift-zero &LQR &$100$& $1.00$& $0.91$& $0.61$& $1.00$& $0.97$& $0.85$& $1.00$& $0.98$& $0.96$& $177.1$ \\
11 &shift-zero &LQR &$10$& $1.00$& $0.94$& $0.94$& $1.00$& $0.93$& $1.00$& $1.00$& $0.95$& $1.00$& $44.0$ \\
12 &shift-zero &LQR &$1$& $1.00$& $0.94$& $0.91$& $1.00$& $0.91$& $0.99$& $1.00$& $0.89$& $1.00$& $279.0$ \\
13 &shift-zero &zero &$100$& $26.36$& $0.97$& $0.14$& \,$31.00$& $0.98$& $0.60$& $17.07$& $0.95$& $0.82$& $192.4$ \\
14 &shift-zero &zero &$10$& $19.02$& $0.96$& $0.79$& $2.95$& $0.92$& $0.98$& $1.70$& $0.95$& $1.00$& $45.7$ \\
15 &shift-zero &zero &$1$& $2.26$& $0.92$& $0.94$& $1.04$& $0.91$& $0.99$& $1.00$& $0.89$& $1.00$& $279.2$ \\
16 &shift-zero &naive &$100$& $2.00$& $0.87$& $0.54$& $1.93$& $0.97$& $0.86$& $1.85$& $0.96$& $0.97$& $180.3$ \\
17 &shift-zero &naive &$10$& $1.86$& $0.94$& $0.97$& $1.48$& $0.95$& $1.00$& $1.26$& $0.95$& $1.00$& $44.3$ \\
18 &shift-zero &naive &$1$& $1.36$& $0.94$& $0.93$& $1.02$& $0.91$& $0.99$& $1.00$& $0.89$& $1.00$& $279.1$ \\
19 &copy &LQR &$100$& $1.00$& $0.77$& $0.72$& $1.00$& $0.98$& $0.67$& $1.00$& $1.00$& $0.81$& $319.7$ \\
20 &copy &LQR &$10$& $1.00$& $0.59$& $0.74$& $1.00$& $0.90$& $0.98$& $1.00$& $0.92$& $1.00$& $64.8$ \\
21 &copy &LQR &$1$& $1.00$& $0.13$& $0.83$& $1.00$& $0.52$& $0.87$& $1.00$& $0.65$& $0.95$& $431.8$ \\
22 &copy &zero &$100$& $12.95$& $0.66$& $0.24$& $11.82$& $0.98$& $0.56$& $8.58$& $0.98$& $0.78$& $335.2$ \\
23 &copy &zero &$10$& $11.16$& $0.95$& $0.78$& $2.61$& $0.89$& $0.98$& $1.70$& $0.93$& $1.00$& $66.6$ \\
24 &copy &zero &$1$& $2.11$& $0.17$& $0.90$& $1.05$& $0.53$& $0.87$& $1.00$& $0.64$& $0.95$& $431.9$ \\
25 &copy &naive &$100$& $2.00$& $0.81$& $0.82$& $1.93$& $0.98$& $0.68$& $1.86$& $1.00$& $0.82$& $322.9$ \\
26 &copy &naive &$10$& $1.87$& $0.64$& $0.83$& $1.50$& $0.90$& $0.99$& $1.28$& $0.93$& $1.00$& $65.1$ \\
27 &copy &naive &$1$& $1.38$& $0.15$& $0.87$& $1.03$& $0.52$& $0.87$& $1.00$& $0.65$& $0.95$& \,\,\,$431.8$ \\
\bottomrule
\end{tabular}
\end{table*}
\end{landscape}

\section{Conclusions and outlook}
\label{sec:Conclusions}

This paper focused on MPC based on real-time ADMM. The restriction to a finite number of ADMM iterations per time step allowed us to systematically analyze the dynamics of the controlled system. The first part of the analysis showed that the closed-loop dynamics can be described based on a nonlinear augmented model. The associated augmented state consists of the original states $x$, the decision variables~$z$, and the Lagrange multipliers  $\mu$.
The second part of the analysis revealed that the nonlinear dynamics turn into linear ones around the augmented origin. 
We further investigated where the linear dynamics apply and characterized a positively invariant set in the augmented state space. 
The third part of the analysis addressed the influence of  various ADMM parameters.
We motivated different choices for every parameter and evaluated their efficiency in a comprehensive numerical benchmark.
The benchmark clearly indicates that real-time ADMM is competitive to classical MPC for suitable parametrizations.
In fact, for some parametrizations (e.g., in line $8$ of Tab.~\ref{tab:Benchmark} with $M=10$), we found a high convergence ratio of $97\%$ and simultaneously a performance ratio of almost $100\%$, i.e., nearly optimal.
 
The obtained results extend the findings in \cite{SchulzeDarup2019_ECC_ADMM} in many directions. First,  the analysis in \cite{SchulzeDarup2019_ECC_ADMM} is restricted to one ADMM iteration per time step (i.e., $M=1$), whereas the new results support $M \geq 1$. 
 Second, Propositions~\ref{prop:eigenvaluesSM} and \ref{prop:stableParameters} provide novel findings on  Schur stability and eigenvalues of $\Sb_M$. 
Third, observability of the pair  $(\Cb_M,\Sb_M)$ has been formally proven in Proposition~\ref{prop:observability}. 

 While the presented results are more complete than the pioneering work \cite{SchulzeDarup2019_ECC_ADMM}, many promising extensions are left for future research.
First, the ADMM-based MPC here and in \cite{SchulzeDarup2019_ECC_ADMM} builds on  the ``uncondensed'' QP~\eqref{eq:QP}. It would be interesting to study ADMM-based MPC derived from the ``condensed'' QP in \cite{Ghadimi2015}. Second, the stability analysis of the augmented system is still incomplete. 
It may, however, be possible to extend the guaranteed domain of attraction beyond the linear regime by exploiting~\eqref{eq:decreaseADMM} or the  contraction estimates recently proposed in \cite{Zanelli2019}.
 Third, robustness against disturbances has not been considered yet. Fourth, real-world applications of the proposed predictive control scheme should be addressed.
In this context, it is interesting to note that real-time optimization schemes not only support embedded and fast controller implementations. In fact, they also pave the path for encrypted predictive control as in~\cite{SchulzeDarup2018_NMPC}.

\end{document}